\newcommand{\bfa}[1]{\boldsymbol{#1}} 			
\newcommand{\bfeps}{\boldsymbol{\epsilon}}
\newcommand{\tr}{\text{tr}}       				%
\newcommand{\jump}[1]{[\![#1]\!]}
\DeclareMathAlphabet{\mathpzc}{OT1}{pzc}{m}{it}
\newcommand{\bfn}{\boldsymbol{n}}	
\newcommand{\bfu}{\boldsymbol{u}}
\newcommand{\bfw}{\boldsymbol{w}}
\newcommand{\bfB}{\boldsymbol{B}}	
\newcommand{\bfx}{\boldsymbol{x}}
\newcommand{\bfT}{\boldsymbol{T}}		
\newcommand{\bfI}{\boldsymbol{I}}	 
\newcommand{\bfzero}{\boldsymbol{0}}
\newcommand{\bff}{\boldsymbol{f}}	
\newcommand{\bfg}{\boldsymbol{g}}
\newtheorem{Proposition}{Proposition}
\newtheorem{Algorithm}{Algorithm}
\newtheorem{Theorem}{Theorem}
\newtheorem{remark}{Remark}
\newtheorem*{cwf}{Continuous weak formulation}
\newtheorem*{dwf}{Discrete formulation}
\providecommand{\keywords}[1]
{
  \small	
  \textbf{\textit{Keywords---}} #1
}
\title{ An $hp$-adaptive finite element framework for static cracks: The impact of pointwise density variations on mode I, mode II, and mixed-mode fracture}
\author[]{S. M. Mallikarjunaiah}
\affil[]{Department of Mathematics \& Statistics, Texas A\&M University-Corpus Christi, TX- 78412, USA}
\affil[ ]{\textit{E-mail addresses:} \texttt{M.Muddamallappa@tamucc.edu}}
\date{}
\begin{document}

\maketitle  

\begin{abstract}
A robust $hp$-adaptive finite element framework is presented for the investigation of static cracks in materials characterized by complex, pointwise density variations. Within such heterogeneous media, the equilibrium equation—governed by the divergence of the stress tensor—is reduced to a vector-valued quasilinear partial differential equation, wherein significant gradients and nonlinearities are introduced into the governing constitutive relations by spatial density variations. To ensure that these localized field variables are accurately captured, an automated $hp$-refinement strategy is implemented so that element sizes ($h$) and polynomial degrees ($p$) are concurrently optimized.  A dual-indicator approach is employed by the framework. In this approach, $h$-refinement is driven by the \textit{Kelly error estimator}, which utilizes the jump of the normal derivative across element interfaces, while $p$-refinement is determined by the decay of \textit{Legendre-Fourier coefficients} to assess the local smoothness of the solution. Optimal convergence rates are ensured by this dual-refinement strategy, particularly within the singular regions surrounding the crack tip where material heterogeneities and geometric discontinuities intersect. The performance of the proposed method is rigorously evaluated across three distinct loading regimes: pure tensile (Mode I), pure shear (Mode II), and mixed-mode loading. It is demonstrated through numerical simulations that the intricate interaction between the fracture process zone and the underlying material heterogeneities is effectively resolved by the $hp$-adaptive scheme. Furthermore, the influence of specific density-dependent constitutive relations on the resulting crack-tip stress fields and strain energy density distributions is analyzed in detail. Superior computational efficiency is shown to be provided by the framework, which offers a high-fidelity computational tool suitable for the predictive design and failure analysis of functionally graded and heterogeneous engineering materials.
\end{abstract}

\vspace{.1in}

\noindent \keywords{$hp$-adaptive finite element method; Pointwise density variations; Constitutive relations; Heterogeneous media; Crack-tip singularities; Strain energy density}
 
\section{Introduction}
The Finite Element Method (FEM) has solidified its position as the preeminent numerical paradigm for the discretization of partial differential equations (PDEs) governing complex boundary value problems (BVPs) in solid mechanics \cite{zienkiewicz2005finite,bonito2020finite}. Historically, the quest for numerical precision has been dominated by two distinct refinement strategies: $h$-refinement, which achieves accuracy through the iterative subdivision of the mesh into smaller topological units, and $p$-refinement, which elevates the polynomial order of the basis functions within a static mesh. While $h$-refinement is robust for a wide range of engineering applications, it is often plagued by a slow algebraic convergence rate, particularly when the solution exhibits localized singularities or sharp internal layers. Conversely, $p$-refinement offers spectral-like convergence for globally smooth solutions but remains notoriously inefficient when confronted with the non-smooth features characteristic of crack-tip neighborhoods. To address these bifurcated limitations, the $hp$-adaptive finite element method ($hp$-AFEM) emerged as a sophisticated hybrid framework \cite{babuska1981p,babuvska1986basic}. By concurrently optimizing the characteristic element size ($h$) and the local polynomial degree ($p$), $hp$-AFEM enables the method to achieve exponential rates of convergence in the energy norm, provided the solution possesses a certain level of piecewise analytical regularity \cite{schwab1998p,szabo1986mesh}. In the context of nonlinear problems, such as those governed by algebraically complex constitutive laws, the computational efficiency of $hp$-AFEM is not merely an advantage but a necessity. Because each iteration of a nonlinear solver (e.g., Newton-Raphson) incurs significant computational costs, the ability to reach a prescribed error tolerance with a drastically reduced number of degrees of freedom (DOFs) compared to traditional $h$-refinement is essential for maintaining computational tractability.

In the domain of fracture mechanics, the numerical challenge is exacerbated by the presence of crack-tip singularities. Classical Linear Elastic Fracture Mechanics (LEFM), though foundational, is predicated on the assumption of infinitesimal deformations and a linear mapping between the Cauchy stress and linearized strain tensors. This framework inherently predicts a square-root singularity in the stress and strain fields at the crack tip, a result that leads to physically implausible, unbounded energy densities \cite{Inglis1913,love2013treatise}. Such issues have prompted a paradigm shift toward nonlinear elasticity and the development of constitutive representations that more accurately reflect the finite capacity of materials to undergo deformation. While numerous models—including gradient-enhanced and non-local theories—have been proposed to regularize these singularities, they often introduce substantial computational overhead or require the calibration of non-physical length-scale parameters \cite{gurtin1975,gazonas2023numerical}.  The historical trajectory of constitutive modeling has been defined by a persistent attempt to bridge the gap between mathematical simplicity and physical reality. Traditionally, the Cauchy formulation and the Green-elastic (hyperelastic) formulation have provided the strong foundation for structural mechanics \cite{zienkiewicz2005finite}. However, these frameworks are intrinsically limited by their inability to accommodate classes of material response where the strain remains small as stress magnitudes grow without bound. This is most palpable in brittle fracture, where classical models often fail to prevent non-physical strain singularities.

The quest for regularization has led to a diverse array of nonlinear strategies. Knowles and Sternberg \cite{knowles1983large} investigated hyperelastic materials exhibiting hardening/softening responses, while Tarantino \cite{tarantino1997nonlinear} utilized the ``Bell constraint'' to govern volumetric response. A transformative shift occurred with the introduction of \textit{Implicit Constitutive Theory} by Rajagopal \cite{rajagopal2003implicit,rajagopal2011non}. This theory defines the mechanical response through implicit relations of the form $\mathbf{f}(\bfT, \bfeps) = \mathbf{0}$, where $\bfT$ is the Cauchy stress and $\bfeps$ is the linearized strain measure.  Within this framework, ``strain-limiting'' models express linearized strain as a bounded function of Cauchy stress, even under singular conditions \cite{bustamante2018nonlinear,gou2023computational,gou2023finite,bulivcek2015existence,rodriguez2021stretch,kulvait2013anti,yoon2022CNSNS,yoon2022MAM,gou2015modeling}. The present study extends this by incorporating pointwise variations in density. This introduces a ``multi-scale'' challenge: the $hp$-adaptive scheme must resolve the crack-tip singularity while simultaneously approximating the material's internal gradients. While previous efforts focused on pure Mode I loading \cite{yoon2024finite}, a comprehensive $hp$-adaptive treatment of the mixed-mode fracture problem in the presence of density-induced nonlinearities remains a significant gap in the current state of the art.

Rajagopal and his collaborators introduced, through a rigorous thermodynamic re-evaluation of the constitutive relations for elastic bodies \cite{rajagopal2003implicit,rajagopal2007elasticity,rajagopal2011non,rajagopal2018mechanics}. Deviating from the traditional Cauchy-Green formulations, Rajagopal’s theory introduces implicit constitutive models where the relationship between stress and deformation gradient is defined by a manifold rather than a simple explicit function \cite{bustamante2018nonlinear}. A particularly salient subclass of these models is the "strain-limiting" material, which allows for the representation of linearized strain as a uniformly bounded function across the entire domain, even when the stress becomes singular \cite{rajagopal2011modeling}. This "limiting strain" property is critical for the investigation of brittle fracture, as it bridges the gap between the theoretically singular stress fields required for crack propagation and the physically bounded strain fields \cite{MalliPhD2015,mallikarjunaiah2015direct}.

Despite the theoretical elegance of Rajagopal’s framework \cite{rajagopal2007elasticity}, its application to heterogeneous media---specifically those characterized by pointwise density variations---remains an underdeveloped research frontier. Density heterogeneity often induces localized variations in material response to external stimuli, generating complex internal gradients that may interact unpredictably with damage or fracture process zones. In \cite{rajagopal2021implicit}, a novel theory for deriving constitutive response classes was proposed for materials exhibiting spatial density variations, such as shale, concrete, biological bone, medical implants, 3D-printed titanium, and spacecraft thermal protection systems.  A specific class of implicit relations, wherein stress and strain appear linearly, has been investigated in \cite{itou2023generalization,itou2022investigation,itou2019well,itou2021implicit}. These studies developed boundary value problems for cracked bodies subjected to non-penetration conditions, establishing well-posedness by thresholding dilatation and employing Lions' existence theorem for pseudo-monotone variational inequalities. Furthermore, the state of stress and strain near circular voids in materials described by algebraically nonlinear constitutive relations was examined in \cite{murru2021stress,murru2021astress,vajipeyajula2023stress,mallikarjunaiah2025crack}. Building on this, a novel approach to modeling damage initiation in concrete was introduced in \cite{murru2022a,murru2022b}, addressing existing limitations by defining damage through load-induced microscopic density changes. In \cite{alagappan2023note}, a model predicated on the concepts in \cite{rajagopal2021implicit} was introduced for elastic bodies with moduli dependent on both density and mechanical pressure. Numerical results indicated a distinct response compared to classical homogeneous linear elasticity. Similarly, \cite{arumugam2024new} proposed a new constitutive relation specifically for bone tissue. Despite these advancements, the aforementioned studies have not addressed crack-tip fields---such as stress, strain, and strain energy density---within this new class of materials. However, recent computational efforts \cite{gou2025computational,yoon2024finite} have begun considering static cracks in elastic solids governed by the relations proposed in \cite{rajagopal2021implicit}.

The present work addresses this critical gap by developing a robust $hp$-adaptive framework tailored for the analysis of stationary cracks in materials with spatially varying density. The core innovation of this study lies in the implementation of an automated $hp$-refinement strategy that resolves the intricate interaction between material nonlinearity and pointwise density-dependent constitutive laws. By leveraging a high-order discretization scheme, the method is shown to capture the delicate transition between high-stress concentrations and bounded strain fields with remarkable precision. The remainder of this paper is organized as follows: Section 2 provides a detailed derivation of the governing nonlinear constitutive equations and the specific density-dependent relation $\phi_1$. Section 3 outlines the development of the boundary-value problem for a static crack and discusses the existence of a solution. Section 4 describes the finite element discretization of the nonlinear problem and the $hp$-adaptive finite element formulation, including \textit{Kelly} error estimation and automated refinement algorithms. In Section 5, a comprehensive suite of numerical experiments across Mode I, Mode II, and mixed-mode loading conditions is presented. These results facilitate a parametric sensitivity analysis of crack-tip fields with respect to the material constant $\beta$. Finally, Section 6 offers concluding remarks and discusses the implications of this work for future studies.

\section{Introduction to the mathematical framework of Rajagopal's theory of elasticity}

Let $\mathcal{D} := \mathcal{D}(t) \subset \mathbb{R}^2$ be a bounded, connected polyhedral domain with a smooth boundary $\partial \mathcal{D}$ and a crack set denoted by $\Gamma$. We consider the domain to be occupied by an elastic body. Let $\mathcal{D}$ be in reference (undeformed) configuration. The boundary of this domain is assumed to be Lipschitz continuous, which ensures it is sufficiently regular for the application of standard integral theorems and the formulation of well-posed boundary value problems. The vector space of second-order symmetric tensors in two dimensions is denoted by $Sym(\mathbb{R}^{2\times2})$. This space is equipped with the standard Frobenius inner product, defined for any two tensors $\mathbf{A}_{1}, \mathbf{A}_{2} \in Sym(\mathbb{R}^{2\times2})$ as $\mathbf{A}_{1}:\mathbf{A}_{2} = \mathrm{tr}(\mathbf{A}_{1}^{T}\mathbf{A}_{2})$, where $\mathrm{tr}(\cdot)$ is the trace operator. The associated norm is given by $\mathbf{\left\Vert A\right\Vert = \sqrt{A:A}}$. The motion of the body is described by mapping each material point $\bfa{X}$ from the reference configuration $\mathcal{D}$ to its corresponding spatial position $\bfa{x}$ in the current (deformed) configuration. The {displacement vector} $\bfa{u}:\mathcal{D}  \longrightarrow \mathbb{R}^{2}$ quantifies the change in position of a material point and is defined as: $\bfa{u}(\bfa{X}) = \bfa{x}(\bfa{X}) - \bfa{X}$
The local deformation at a point is characterized by the {deformation gradient tensor} $\bfa{F}:\mathcal{D}  \longrightarrow \mathbb{R}^{2\times2}$, which is defined as the gradient of the spatial position with respect to the reference position: $
\bfa{F} = \frac{\partial\bfa{x}}{\partial\bfa{X}} = \bfa{I} + \nabla\bfa{u}$
where $\bfa{I}$ is the second-order identity tensor and $\nabla$ is the gradient operator with respect to $\bfa{X}$. From the deformation gradient, we can define objective (frame-indifferent) measures of stretching and rotation. The {right and left Cauchy-Green stretch tensors}, denoted by $\bfa{C}$ and $\bfa{B}$ respectively, are symmetric positive-definite tensors given by \cite{gurtin1982introduction}: 
$$
\bfa{C} = \bfa{F}^{T}\bfa{F} \quad \text{and} \quad \bfa{B} = \bfa{F}\bfa{F}^{T}.
$$ 
Based on these, we define two fundamental nonlinear strain measures. The {Green-St. Venant strain tensor} $\bfa{E}:\Omega \longrightarrow Sym(\mathbb{R}^{2\times2})$ measures strain with respect to the reference configuration and is expressed as: 
$$
\bfa{E} = \frac{1}{2}(\bfa{C}-\bfa{I}) = \frac{1}{2}\left(\nabla\bfa{u} + (\nabla\bfa{u})^T + (\nabla\bfa{u})^T\nabla\bfa{u}\right).
$$ 
Conversely, the {Almansi-Hamel strain tensor} $\boldsymbol{e}$ measures strain with respect to the current configuration and is defined as: 
$$
\bfa{e} = \frac{1}{2}(\bfa{I}-\bfa{B}^{-1}).
$$ 

In \cite{rajagopal2007elasticity,rajagopal2018mechanics,rajagopal2003implicit} constitutive relationship between the Cauchy stress $\bfa{T} \colon \mathcal{D} \to \mathbb{R}^{d \times d}_{sym}$ and the left Cauchy-Green strain $\bfa{B}$ is defined by the implicit relation:
\begin{equation}\label{model1}
\mathcal{F}(\bfa{T}, \;  \bfa{B}) = \bfa{0},
\end{equation}
where $\mathcal{F}$ is a nonlinear tensor-valued mapping. The governing constitutive framework stems from a general implicit relationship linking the Cauchy stress $\mathbf{T}$, the linearized strain $\bfeps$, and the material density $\rho$. Assuming that the function $\mathcal{F}$ is \textit{isotropic}, then one can write the most general form of the implicit constitutive relation \cite{rajagopal2011modeling,rajagopal2009class}:
\begin{align} 
0 &= \delta_0 \, \bfI + \delta_1 \, \bfT + \delta_2 \, \bfB +  \delta_3 \, \bfT^2 + \delta_4 \, \bfB^2 +  \delta_5 \, ( \bfT \bfB + \bfB \bfT) + {\delta_6 \, ( \bfT^2 \bfB + \bfB \bfT^2)}  \notag \\
& + \delta_7 \, ( \bfB^2 \bfT + \bfT \bfB^2) +  \delta_8 \, ( \bfT^2 \bfB^2 + \bfB^2 \bfT^2),  
\end{align}
where the material moduli $\delta_i$ for $i=0,\, 1, \,  \ldots, 8$ are scalar functions that depend on the density and the invariants for the pair $\bfT$ and $\bfB$, i.e.,
\begin{equation}
\big\{\rho, \, \tr (\bfT), \, \tr(\bfB), \, \tr(\bfT^2), \, \tr(\bfB^2), \,\tr(\bfT^3), \, \tr(\bfB^3), \,\tr(\bfT \, \bfB), \, \tr(\bfT^2 \, \bfB), \,
\tr(\bfT \, \bfB^2), \, \tr(\bfT^2 \, \bfB^2)   \big\}.
\end{equation}
Upon linearization, predicated on the assumption of infinitesimal displacement gradients, this generalized implicit response takes the form of a polynomial expansion:
\begin{equation} \label{eq:general_implicit}
    \beta_{0} \bfeps  + \beta_{1}\mathbf{I} + \beta_{2}\mathbf{T} + \beta_{3}\mathbf{T}^{2} + \beta_{4}(\mathbf{T}\bfa{\epsilon} + \bfa{\epsilon} \mathbf{T}) + \beta_{5}(\mathbf{T}^{2} \bfeps + \bfeps \mathbf{T}^{2}) = \mathbf{0}.
\end{equation}
The scalar response functions $\beta_i$ ($i=0,\dots,5$) exhibit specific dependencies: $\beta_1, \beta_2, \beta_3$ are permitted to depend linearly on the strain $\bfeps$, whereas $\beta_0, \beta_4, \beta_5$ are functions solely of the invariants of the stress tensor $\bfT$ \cite{rajagopal2021implicit}.  To derive a tractable model for porous solids, we focus on a specific subclass of this general formulation where the constitutive relation remains linear with respect to the stress tensor $\bfT$. This restriction yields a model governed by constant material moduli $E_i$ and $a_i$, expressed as:
\begin{equation} \label{eq:subclass_linear}
    (1 + a_{3} \, \tr \bfeps) \bfeps = E_{1}(1 + a_{1}\operatorname{tr}\boldsymbol{\epsilon})\mathbf{T} + E_{2}(1 + a_{2}\operatorname{tr}\boldsymbol{\epsilon})(\operatorname{tr}\mathbf{T})\mathbf{I}.
\end{equation}
Here, $\mathbf{I}$ denotes the identity tensor. It is instructive to note that standard linear elasticity is recovered as a degenerate case of this model when the coupling parameters vanish, i.e., $a_1 = a_2 = a_3 = 0$, and then we can identify 
\begin{equation}\label{eq:material_coeff}
E_1 = \dfrac{1 + \nu}{E} = \dfrac{1}{2 \, \mu} >0, \quad E_2 = - \dfrac{ \nu}{E}  <0,
\end{equation}
where $E$ is the Young's modulus and $\nu$ is the Poisson's ratio, and these are related to the Lam\`e constants $\lambda$ and $\mu$:
\begin{equation}\label{eq:linear_lame}
\lambda =  \dfrac{ E \, \nu}{(1 + \nu) (1 - 2 \nu)}, \quad \mu = \dfrac{ E }{2 (1 + \nu)}.
\end{equation}

\begin{remark}
In the regime of small displacement gradients, where $\|\nabla \bfu\|  \ll 1$, several mechanical simplifications are established for the analysis. Specifically, the Cauchy stress $T$ and the Piola-Kirchhoff stress $\bfa{S}$ are treated as equivalent, $\bfT \approx \bfa{S}$, which is a consequence of the following kinematic approximations:
\begin{equation}
    \det \bfa{F} \approx 1 + \tr(\nabla \bfu), \quad \bfa{F}^{-T} = (I + \nabla \bfu)^{-T} \approx (I - \nabla \bfu)^{T}.
\end{equation}
These assumptions align with the classical theory of linearized elasticity. It is important to note that while $\text{tr}(\nabla u)$ is ignored when compared to $1$ because it is infinitesimal, the term $a_{\ast} \text{tr}(\nabla u)$ must be retained as the material parameter $a_{\ast}$ is large. Consequently, the balance of mass is approximated by:
\begin{equation}
    \rho \approx \rho_{R}(1 - a_{\ast} \,  \tr(\nabla \bfu)).
\end{equation}
Given the small norm of the displacement gradient, although $\rho \approx \rho_{R}$, the term $a_{\ast}  \text{tr} \epsilon$ is non-negligible and is preserved in the constitutive framework.
\end{remark}
A critical feature of this implicit model is revealed by taking the trace of Eq. \eqref{eq:subclass_linear}. This operation yields a scalar constraint equation, $\Phi( \tr \bfeps, \;  \tr \bfT) =0$, which implicitly couples the volumetric strain and the mean normal stress: 
\begin{equation} \label{eq:trace_implicit}
   ( 1 + a_3 \,  \tr \bfT) \,  \tr \bfeps - E_1 (1 + a_1 \, \tr \bfeps) \, \tr \bfT - d \, E_2 (1 + a_2 \, \tr \bfeps) =0
\end{equation}
This algebraic relation can be inverted to provide an explicit definition of the volumetric strain in terms of the stress invariant:
\begin{equation} \label{eq:volumetric_strain}
\tr \, \bfeps = \dfrac{(E_1 + d \, E_2) \, \tr \bfT}{ \left[  1 + (a_3 - E_1 a_1 - d E_2 a_2) \tr \bfT           \right]}
\end{equation}
In these equations, $d$ denotes the dimension of the problem.  By substituting the explicit expression for $\tr \, \bfeps $ from Eq. \eqref{eq:subclass_linear} back into the original constitutive framework, we arrive at a closed-form representation of the linearized strain:
\begin{equation} \label{eq:strain_full_sub}
\bfeps = E_1 \, \Phi_1(\tr \bfT) \, \bfT + E_2 \, \Phi_2(\tr \bfT) \, \tr \bfT \, \mathbf{I}
\end{equation}
To facilitate the analysis of mathematical properties, this somewhat cumbersome expression is condensed using two auxiliary linear-fractional functions, $\Phi_1$ and $\Phi_2$. These auxiliary functions encapsulate the nonlinear dependence on the pressure and are defined as:
\begin{subequations}
\begin{align}
\Phi_1(\tr \bfT) &:= \dfrac{ 1 + \left(a_3  + (a_1 -a_2) \, E_2 \, d \right) \tr \bfT}{(1 + a_3 \tr \bfT) \left[  1 + (a_3 - E_1 a_1 - d E_2 a_2) \tr \bfT           \right]} \\
& \notag \\
\Phi_2(\tr \bfT) &:= \dfrac{ 1 + \left(a_3  + (a_1 -a_2) \, E_1  \right) \tr \bfT}{(1 + a_3 \tr \bfT) \left[  1 + (a_3 - E_1 a_1 - d E_2 a_2) \tr \bfT           \right]} 
\end{align}
\end{subequations}
where the critical stress thresholds govern the singular behavior:
\begin{subequations}
\begin{align}
T_{cr1} &= \dfrac{-1}{a_3} \\
T_{cr2} &= \dfrac{-1}{a_3 - E_1 a_1 - d E_2 a_2} 
\end{align}
\end{subequations}
The functions $\Phi_1(\cdot)$ and $\Phi_2(\cdot)$ are neither bounded (from below or above) nor even continuous, so there is no well-posedness theory one can apply to the constitutive relation \eqref{eq:strain_full_sub}. Figure~\ref{fig:phi1} depitcts the function $\Phi_1(\cdot)$ for certain choices 
\begin{figure}[H]
    \centering
    \includegraphics[width=\textwidth]{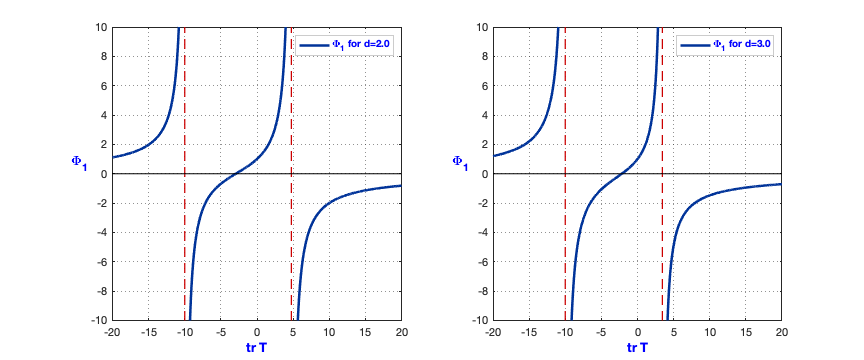} 
    \caption{Numerical evaluation of the response function $\Phi_1(\tr \, \bfT)$ for $d = 2.0$ (left) and $d = 3.0$ (right). The solid blue curves illustrate the function's behavior, while the dashed red lines indicate the vertical asymptotes. Constants used in the computations are: $a_1 = 0.5, a_2 = 0.2, a_3 = 0.1, E_1 = 0.3, E_2 = 0.4$.}
    \label{fig:phi1}
\end{figure}
Our approach to derive the model for elastic solids with pointwise variations in density is based on the approach in \cite{itou2022investigation,itou2021implicit}. For the wellposedness of the model \eqref{eq:strain_full_sub} we choose $a_1 = a_3=0$ equivalently $\Phi_1(\tr \, \bfT) =1$ then $\Phi_2(\tr \bfT) = 1 + \beta \, \tr \bfT$ with $\beta \in \mathbb{R}$, finally the constitutive relation reduces to 
\begin{equation} \label{eq:CEquaiton}
\bfeps = E_1 \, \, \bfT + E_2 \, (1 + \beta \, \tr \bfT) \, \tr \bfT \, \mathbf{I}, 
\end{equation}
The hyperelastic formulation of the above constitutive relation \eqref{eq:CEquaiton} is 
\begin{equation} \label{eq:T}
\bfT = \dfrac{1}{E_1} \, \bfeps - E_2 \, \psi(\nabla \cdot \bfu) \; \nabla \cdot \bfu \, \mathbf{I}, 
\end{equation}
with 
\begin{equation}
\psi( \delta ) := \dfrac{1 + \beta \, \delta }{E_1 + d \, E_2 \, ( 1 + \beta \, \delta)}
\end{equation}

\section{Boundary value problem and existence of solution}\label{BVP}
Investigating cracks and fractures in isotropic elastic materials with nonlinear, density-dependent properties is essential for numerous critical engineering applications. While classical linear elasticity provides a fundamental basis, many advanced natural and engineered materials exhibit mechanical responses that are intrinsically coupled to local density variations. These materials include functionally graded composites, porous geological formations, cellular foams, and biological tissues such as bone. In these contexts, the presence of cracks not only compromises structural integrity but can also interact with density gradients to precipitate catastrophic failures. Consequently, a deep understanding of crack behavior within such nonlinear media is imperative for designing safe and reliable structures. For instance, in aerospace applications where lightweight cellular composites are prevalent, predicting crack propagation through density-varying zones is crucial for ensuring aircraft safety. Similarly, in civil engineering, comprehending the deterioration of concrete---which often behaves as a heterogeneous, density-dependent continuum---is paramount for infrastructure longevity. Therefore, a rigorous investigation of crack-tip fields in isotropic materials with density-dependent moduli is warranted. To this end, we formulate a well-posed boundary value problem governed by a specific nonlinear constitutive law and further propose an $hp$-adaptive finite element method for solving the nonlinear problem.

\subsection{Problem formulation}
We consider the balance of linear momentum coupled with a nonlinear constitutive relationship. The stress tensor $\bfT$ is defined as a function of the infinitesimal strain tensor $\bfeps(\bfu)$ and the volumetric strain $\nabla \cdot \bfu$ (equivalently $\tr(\bfeps)$). The proposed constitutive model is given by:
\begin{equation} \label{eq:NewConstitutive}
\bfT(\bfeps) = \frac{1}{E_1} \, \bfeps - E_2 \, \frac{1 + \beta \, (\nabla \cdot \bfu) }{E_1 + d \, E_2 \, (1 + \beta \, \nabla \cdot \bfu)}\; (\nabla \cdot \bfu) \, \bfI,
\end{equation}
where $E_1, E_2, \beta,$ and $d$ are material constants. Combining this with the equilibrium equations, we obtain the following boundary value problem:
\begin{subequations}
\begin{align}
-\nabla \cdot \bfT(\bfeps(\bfu)) &= \bff, \quad \mbox{in} \quad \mathcal{D}, \label{pde_new} \\
\bfT(\bfeps(\bfu)) \, \bfn &= \bfg, \quad \mbox{on} \;\; \Gamma_N, \label{NCond_new}\\
\bfu &= \bfu_0, \quad \mbox{on} \;\; \Gamma_D,
\end{align}
\end{subequations}
where $\bff \in \left(L^{2}(\mathcal{D}) \right)^2$ is the body force term and $\bfg \in \left( H^{-1/2}(\Gamma_N) \right)^2$ is the traction boundary condition.

For the well-posedness of the above model, the following assumptions are made regarding the data and parameters:
\begin{itemize}
\item[A1:] The material parameters $E_1, E_2, d, \beta$ are real numbers. Furthermore, we assume the coefficients satisfy the ellipticity condition such that the tensor $\bfT$ remains positive definite for all admissible strains.
\item[A2:] In the pure Neumann case ($\Gamma_D = \emptyset$), the data $\bff$ and $\bfg$ satisfy the standard compatibility condition (equilibrium of external forces):
\begin{equation}
\int_{\mathcal{D}} \bff \, d\bfx + \int_{\partial \mathcal{D}} \bfg \, ds = \bfzero.
\end{equation}
\item[A3:] The Dirichlet data $\bfu_0$ is the trace of a function in $\left( H^{1}(\mathcal{D}) \right)^2$.
\end{itemize}

\subsection{Weak Formulation and Existence of Solutions}

We define the space of admissible test functions
\begin{equation}
 \mathbb{V} = \{ \bfw \in (H^1(\mathcal{D}))^2 : \bfw|_{\Gamma_D} = \bfzero \}. 
 \end{equation}
 The weak formulation of the problem is obtained by multiplying \eqref{pde_new} by a test function $\bfw \in \mathbb{V}$ and integrating by parts:
 \begin{cwf}
Find $\bfu \in (H^1(\mathcal{D}))^2$ with $\bfu - \bfu_0 \in \mathbb{V}$ such that:
\begin{equation}\label{weak_form_new}
\int_{\mathcal{D}} \bfT(\bfeps(\bfu)) \cdot \bfeps(\bfw) \, d\bfx = \int_{\mathcal{D}} \bff \cdot \bfw \, d\bfx + \int_{\Gamma_N} \bfg \cdot \bfw \, ds, \quad \forall \bfw \in \mathbb{V}.
\end{equation}
\end{cwf}

For the existence of a unique solution to the continuous weak formulation, we have the following theorem.

\begin{Theorem}
Let $\mathcal{D} \subset \mathbb{R}^2$ be a bounded Lipschitz domain. Under assumptions A1-A3, and assuming the data is sufficiently small such that the system remains in the elliptic regime defined in Remark \ref{rem:singularity}, there exists a unique weak solution $\bfu \in (H^1(\mathcal{D}))^2$ to the problem.
\end{Theorem}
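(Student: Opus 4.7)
My plan is a Banach fixed-point argument organized around the linear-elasticity ``backbone'' hidden inside \eqref{eq:NewConstitutive}. Setting $\delta := \nabla \cdot \bfu$ and Taylor-expanding the scalar factor $g(\delta) := \psi(\delta)\,\delta$ about $\delta=0$, where $\psi(0) = 1/(E_1 + d E_2)$, the stress splits cleanly as
\begin{equation*}
\bfT(\bfeps(\bfu)) \;=\; \underbrace{\tfrac{1}{E_1}\,\bfeps(\bfu) - \tfrac{E_2}{E_1 + d E_2}(\nabla \cdot \bfu)\,\bfI}_{\bfT_L(\bfeps(\bfu))} \;-\; E_2\, h(\nabla \cdot \bfu)\,\bfI,
\end{equation*}
with a remainder $h(\delta) = g(\delta) - \psi(0)\,\delta$ satisfying $h(0) = h'(0) = 0$, hence $|h'(\delta)| \lesssim |\delta|$ on any neighbourhood of the origin bounded away from the thresholds $T_{cr1}, T_{cr2}$. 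The linear piece $\bfT_L$ reproduces classical linear elasticity with Lam\'e constants as in \eqref{eq:material_coeff}--\eqref{eq:linear_lame}, so under A1 the bilinear form $a(\bfu, \bfw) := \int_\mathcal{D} \bfT_L(\bfeps(\bfu)) : \bfeps(\bfw)\,d\bfx$ is bounded on $\mathbb{V} \times \mathbb{V}$ and, since $\Gamma_D$ has positive surface measure, coercive via Korn's first inequality.

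\textbf{Setup of the iteration.} I homogenise the Dirichlet data by choosing, using A3, a lift $\bfu_g \in (H^1(\mathcal{D}))^2$ with trace $\bfu_0$ on $\Gamma_D$, and seek $\tilde\bfu = \bfu - \bfu_g \in \mathbb{V}$. The weak formulation \eqref{weak_form_new} is recast as $a(\tilde\bfu, \bfw) = \ell(\bfw) + n(\tilde\bfu + \bfu_g, \bfw)$, where $\ell \in \mathbb{V}^{*}$ bundles $\bff$, $\bfg$ and the $\bfT_L$-contribution of $\bfu_g$, and $n(\bfv, \bfw) := \int_\mathcal{D} E_2\, h(\nabla \cdot \bfv)\,(\nabla \cdot \bfw)\,d\bfx$ collects the genuinely superlinear part. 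I then define $\mathcal{T}: \mathbb{V} \to \mathbb{V}$ by letting $\mathcal{T}(\bfv)$ be the unique Lax--Milgram solution (supplied by boundedness and coercivity of $a$) of the linear problem $a(\mathcal{T}(\bfv), \bfw) = \ell(\bfw) + n(\bfv + \bfu_g, \bfw)$ for all $\bfw \in \mathbb{V}$.

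\textbf{Ball-invariance and contraction.} On a closed ball $\overline{B_R} \subset \mathbb{V}$ of small radius $R$, the goal is to show that $\mathcal{T}$ maps $\overline{B_R}$ into itself and is a strict contraction. Both statements reduce to the quadratic Lipschitz estimate
\begin{equation*}
|n(\bfv_1, \bfw) - n(\bfv_2, \bfw)| \;\leq\; C_{*} \bigl(\|\bfv_1\|_\mathbb{V} + \|\bfv_2\|_\mathbb{V}\bigr)\,\|\bfv_1 - \bfv_2\|_\mathbb{V}\,\|\bfw\|_\mathbb{V},
\end{equation*}
which, combined with coercivity of $a$, yields $\|\mathcal{T}(\bfv_1) - \mathcal{T}(\bfv_2)\|_\mathbb{V} \leq K R\,\|\bfv_1 - \bfv_2\|_\mathbb{V}$ on $\overline{B_R}$. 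Choosing $R$ with $K R < 1$, and requiring $\|\ell\|_{\mathbb{V}^{*}} + \|\bfu_g\|_{H^1}$ small enough that $\|\mathcal{T}(0)\|_\mathbb{V} \leq (1-KR)R$, closes the self-map property. Banach's theorem then delivers a unique fixed point $\tilde\bfu \in \overline{B_R}$, whence $\bfu = \tilde\bfu + \bfu_g$ is the asserted unique weak solution of \eqref{weak_form_new}; uniqueness in the stated smallness regime is inherited directly from the contraction.

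\textbf{Main obstacle.} The hard step is the quadratic Lipschitz estimate on $n$. Pointwise one has $|h(\delta_1) - h(\delta_2)| \lesssim (|\delta_1| + |\delta_2|)\,|\delta_1 - \delta_2|$, but integrating this against $|\nabla \cdot \bfw|$ forces control of a triple product of divergences in $L^1(\mathcal{D})$, whereas bare $H^1$-regularity gives only $L^2$. In two dimensions this is bridged by a Ladyzhenskaya--Gagliardo--Nirenberg interpolation of the form $\|\nabla \bfv\|_{L^4} \lesssim \|\nabla \bfv\|_{L^2}^{1/2}\,\|\bfv\|_{H^2}^{1/2}$ together with $H^2$ elliptic regularity for the linearised problem on the polygonal Lipschitz $\mathcal{D}$ considered in the numerical section, producing $C_{*}$ with a constant depending only on $\mathcal{D}$ and the material parameters. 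The smallness assumption in the theorem, via Remark \ref{rem:singularity}, is exactly what is needed to keep the iterates inside the neighbourhood of $0$ where $h$ is $C^1$ and bounded away from the singularities of $\psi$, so that the decomposition and the Taylor bound on $h'$ used above are rigorous.
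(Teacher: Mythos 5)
Your fixed-point scheme is a genuinely different route from the paper, which argues (in sketch) via monotonicity of the stress operator and a nonlinear generalization of Lax--Milgram (Browder--Minty); you instead split off the linear-elastic backbone and contract on the quadratic remainder. The difficulty you flag at the end, however, is not a loose end but a fatal one, and the fix you propose does not work on the domains this paper is actually about. The operator $\mathcal{T}$ is not well defined on $\mathbb{V}\subset (H^1(\mathcal{D}))^2$: for a fixed $\bfv\in\mathbb{V}$, the right-hand side $\ell(\cdot)+n(\bfv+\bfu_g,\cdot)$ must be a bounded functional on $\mathbb{V}$, and since $h(\delta)$ vanishes to second order at $0$ one needs $h(\nabla\cdot\bfv)\in L^2(\mathcal{D})$, i.e.\ essentially $\nabla\cdot\bfv\in L^4(\mathcal{D})$. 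In two dimensions, $\bfv\in H^1$ gives $\nabla\cdot\bfv\in L^2$ and nothing more; there is no Sobolev embedding that upgrades the gradient. So the very first application of Lax--Milgram inside the iteration already fails without extra regularity, before one even reaches the Lipschitz estimate.

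The remedy you invoke --- the Ladyzhenskaya--Gagliardo--Nirenberg bound $\|\nabla\bfv\|_{L^4}\lesssim\|\nabla\bfv\|_{L^2}^{1/2}\|\bfv\|_{H^2}^{1/2}$ plus $H^2$ elliptic regularity for the linearized problem --- is precisely what is unavailable here. $H^2$ regularity for the Lam\'e operator requires a convex or $C^{1,1}$ boundary; it fails at re-entrant corners and, crucially, fails at a crack tip, where the solution of the linearized problem is only in $H^{3/2-\varepsilon}$ near the tip. Since the whole purpose of the paper is to resolve crack-tip singularities (and the domain in Section~3 is defined with a crack set $\Gamma$), the interpolation inequality you rely on cannot be applied, and the constant $C_*$ you obtain does not exist in the stated setting. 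To make a contraction argument rigorous one would have to work in a strictly smaller space than $H^1$ (e.g.\ a weighted Sobolev or Kondrat'ev space adapted to the crack geometry, or $W^{1,p}$ with $p>2$ obtained from Meyers-type estimates), and re-prove well-definedness and contraction there; alternatively, truncate $h$ to make it globally Lipschitz, apply Browder--Minty on $H^1$, and then establish separately that the solution's divergence stays in the untruncated range --- which is again a regularity statement. As written, the proposal silently assumes regularity the cracked domain does not possess, so the self-map and contraction properties of $\mathcal{T}$, and hence existence and uniqueness, are not established.
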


\begin{remark}\label{rem:singularity}
\textbf{Condition on parameters ($E_2 < 0, \beta < 0$):}
Since $E_1 > 0$ and $E_2 < 0$, the denominator in the constitutive law \eqref{eq:NewConstitutive} introduces a singularity. Expanding the denominator yields $E_1 + d E_2 + d E_2 \beta (\nabla \cdot \bfu)$. The singularity occurs at the critical volumetric strain:
\begin{equation}
\xi_{crit} = - \frac{E_1 + d E_2}{d E_2 \beta}.
\end{equation}
For the existence proof to be valid, the solution must satisfy the constraint $\nabla \cdot \bfu \neq \xi_{crit}$ (typically $\nabla \cdot \bfu < \xi_{crit}$ if approximating from zero) almost everywhere. Furthermore, to ensure strong ellipticity, the parameters must satisfy the condition that the local bulk modulus remains positive within the admissible range of strains.
\end{remark}

\begin{Proposition}\label{prop:monotonicity}
Let $\mathcal{A}(\bfu)$ be the operator defined by the weak form. The operator is strictly monotone provided that the volumetric strain $\xi = \nabla \cdot \bfu$ satisfies the bound defined in Remark \ref{rem:singularity} and the derivative of the scalar stress response function $h(\xi)$ satisfies:
\begin{equation}
\frac{1}{E_1} + h'(\xi) > 0, \quad \text{where} \quad h(\xi) = - E_2 \, \frac{\xi + \beta \xi^2 }{E_1 + d E_2 + d E_2 \beta \xi}.
\end{equation}
\end{Proposition}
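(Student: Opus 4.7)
The plan is to exploit the additive decomposition of the stress law into a linear isotropic part and a scalar volumetric part, and then to reduce strict monotonicity to a pointwise algebraic inequality. Writing $\xi_u := \nabla\cdot\bfu = \tr\bfeps(\bfu)$, one has $\bfT(\bfeps(\bfu)) = \frac{1}{E_1}\bfeps(\bfu) + h(\xi_u)\bfI$ with $h$ the scalar function from the statement. Subtracting the expressions for $\bfu$ and $\bfv$, taking the Frobenius inner product with $\bfeps(\bfu-\bfv)$, and integrating over $\mathcal{D}$ gives
\begin{equation*}
\langle\mathcal{A}(\bfu)-\mathcal{A}(\bfv),\bfu-\bfv\rangle = \int_{\mathcal{D}}\!\left(\frac{1}{E_1}|\bfeps(\bfu-\bfv)|^{2} + [h(\xi_u)-h(\xi_v)](\xi_u-\xi_v)\right)d\bfx.
\end{equation*}
Strict monotonicity then reduces to pointwise non-negativity of the integrand, with strict positivity on a set of positive measure whenever $\bfu\neq\bfv$.

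Next, I would apply the mean value theorem to the scalar function $h$. By Remark~\ref{rem:singularity}, the volumetric strains $\xi_u(\bfx)$ and $\xi_v(\bfx)$ lie almost everywhere in the open admissible interval on which $h$ is smooth, and since that interval is convex, some intermediate value $\eta(\bfx)$ between them satisfies $h(\xi_u)-h(\xi_v)=h'(\eta)(\xi_u-\xi_v)$. Denoting $\bfe := \bfeps(\bfu-\bfv)$, the integrand takes the form $\frac{1}{E_1}|\bfe|^{2} + h'(\eta)(\tr\bfe)^{2}$. Using the orthogonal split $\bfe = \bfe^{\mathrm{dev}} + \frac{1}{d}(\tr\bfe)\bfI$ and the identity $|\bfe|^{2}=|\bfe^{\mathrm{dev}}|^{2}+\frac{1}{d}(\tr\bfe)^{2}$, the integrand rewrites as
\begin{equation*}
\frac{1}{E_1}\,|\bfe^{\mathrm{dev}}|^{2} + \left(\frac{1}{d\,E_1}+h'(\eta)\right)(\tr\bfe)^{2}.
\end{equation*}
The positivity hypothesis stated in the proposition, together with the dimensional factor, controls the coefficient $\frac{1}{d E_1}+h'(\eta)$ and delivers a pointwise lower bound of the form $c_{0}|\bfe|^{2}$ for some $c_{0}>0$ valid uniformly on the admissible range.

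Finally, integrating this pointwise estimate over $\mathcal{D}$ and invoking Korn's second inequality on $\mathbb{V}$—where the Dirichlet condition on $\Gamma_D$ suppresses the rigid-body kernel—converts the bound into strict positivity of the monotonicity bracket whenever $\bfu-\bfv\not\equiv\bfzero$, yielding the claim. The main technical obstacle is the \emph{admissibility check} underlying the MVT step: one must guarantee that the entire segment joining $\xi_u(\bfx)$ and $\xi_v(\bfx)$ stays strictly on one side of the critical value $\xi_{\mathrm{crit}}$ at a.e.\ $\bfx$, so that $h'(\eta)$ is finite and $\frac{1}{dE_1}+h'(\eta)$ is uniformly bounded away from zero. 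This rests on an \emph{a priori} $L^{\infty}$ bound on $\nabla\cdot\bfu$ and $\nabla\cdot\bfv$ propagated from the smallness assumption of Remark~\ref{rem:singularity}, and propagating that bound to every intermediate $\eta(\bfx)$ is the delicate step that must be executed with care.
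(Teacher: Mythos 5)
Your approach follows the same skeleton as the paper's sketch---split $\bfT$ into the linear isotropic part $\tfrac{1}{E_1}\bfeps$ and the scalar volumetric part $h(\xi)\bfI$, form the monotonicity bracket, analyze the tangent of the scalar response, and appeal to coercivity of the linear part plus Korn to conclude---but you actually carry out the computation, which the paper leaves implicit. The mean-value step, the orthogonal deviatoric/spherical split of $\bfeps(\bfu-\bfv)$, and the invocation of Korn's second inequality on $\mathbb{V}$ are all the right ingredients, and they formalize what the paper only gestures at with phrases like ``calculate the derivative of the volumetric stress contribution'' and ``the total stiffness remains positive definite.''

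There is, however, a genuine gap that your own computation exposes and then papers over. Writing $\bfe=\bfe^{\mathrm{dev}}+\tfrac{1}{d}(\tr\bfe)\bfI$ with $|\bfe|^2=|\bfe^{\mathrm{dev}}|^2+\tfrac{1}{d}(\tr\bfe)^2$, the integrand becomes
\begin{equation*}
\frac{1}{E_1}\,|\bfe^{\mathrm{dev}}|^{2}+\left(\frac{1}{d\,E_1}+h'(\eta)\right)(\tr\bfe)^{2},
\end{equation*}
so the quantity that must be bounded below by a positive constant is $\tfrac{1}{d E_1}+h'(\eta)$, not $\tfrac{1}{E_1}+h'(\eta)$. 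The proposition's hypothesis $\tfrac{1}{E_1}+h'(\xi)>0$ is strictly \emph{weaker} than $\tfrac{1}{d E_1}+h'(\xi)>0$ whenever $d>1$ and $h'$ is negative---which is exactly the regime of interest ($E_2<0$, $\beta<0$). Asserting that ``the positivity hypothesis stated in the proposition, together with the dimensional factor, controls the coefficient $\tfrac{1}{dE_1}+h'(\eta)$'' does not close this gap: it merely names the problem. You need either to strengthen the hypothesis to $\tfrac{1}{d E_1}+h'(\xi)>0$ (the condition your argument actually uses, and the one that Cauchy--Schwarz $(\tr\bfe)^2\le d|\bfe|^2$ also yields), or to supply an argument why the weaker bound suffices despite the dimensional factor. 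Note that this is not a defect unique to your write-up: the paper's proof is only a sketch and never performs the deviatoric split, so it does not see the $1/d$ factor at all. Your more careful argument thus reveals that the proposition's stated sufficient condition is likely off by a factor of $d$. Your remarks about the admissibility constraint needed to apply the MVT on the convex admissible interval (and the attendant a priori $L^\infty$ control on $\nabla\cdot\bfu$ and $\nabla\cdot\bfv$) are correctly flagged as the other delicate point; the paper handles it by the informal ``small strain'' assumption, which is no more rigorous than your stated open problem.
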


\begin{proof}
(Sketch) The operator is composed of a linear part $\frac{1}{E_1}\bfeps$ and a nonlinear volumetric part. The linear part is strictly coercive since $E_1 > 0$. The potential for loss of existence arises from the nonlinear term $h(\xi)$.
Given $E_2 < 0$, the term acts as a stiffening or softening agent depending on the magnitude of $\xi$.
\begin{itemize}
\item \textit{Boundedness:} We restrict the domain of admissible functions to an open set $\mathbb{K} \subset H^1(\mathcal{D})$ such that the denominator $E_1 + d E_2 (1 + \beta \xi) \neq 0$.
\item \textit{Monotonicity:} We calculate the derivative of the volumetric stress contribution. Existence is guaranteed by the Lax-Milgram lemma (or its nonlinear generalization) only if the total stiffness remains positive definite.
\end{itemize}
Specifically, if $E_2$ and $\beta$ are negative, we require that the negative contribution from the nonlinearity does not exceed the positive coercivity of the linear term $\frac{1}{E_1} \bfeps$. Under the assumption of small strains, we have $\nabla \cdot \bfu \approx 0$, and the response is governed by the linearized modulus. Since the singularity is avoided for the small strain regime, the local monotonicity holds, and a unique solution exists in the neighborhood of zero.
\end{proof}

\section{Finite element discretization}
\label{sec:fem}

This section is dedicated to the numerical approximation of the nonlinear boundary value problem established in the previous section. Due to the nonlinear dependence of the stress tensor on the volumetric strain, obtaining a closed-form analytical solution is generally intractable. Consequently, we employ a conforming Galerkin finite element method to discretize the problem, transforming the continuous variational problem into a system of nonlinear algebraic equations. To solve this system efficiently, we formulate a linearization strategy based on the Newton-Raphson method.

\subsection{Discrete formulation}

We discretize the computational domain $\mathcal{D}$ using a family of shape-regular partitions $\{\mathcal{T}_h\}_{h > 0}$, consisting of non-overlapping quadrilateral finite elements $\mathcal{K}$. The mesh parameter is defined as $h := \max_{\mathcal{K} \in \mathcal{T}_h} \operatorname{diam}(\mathcal{K})$. We approximate the solution using the finite-dimensional subspace $\mathbb{V}_h \subset (H^1(\mathcal{D}))^2$ consisting of continuous, piecewise bilinear vector-valued functions:
\begin{equation}
\mathbb{V}_{h} := \left\{ \bfa{v}_h \in (C^{0}(\overline{\mathcal{D}}))^2 \colon \left. \bfa{v}_h \right|_{\mathcal{K}} \in (\mathbb{Q}_{1}(\mathcal{K}))^2, \forall \mathcal{K} \in \mathcal{T}_h \right\},
\end{equation}
where $\mathbb{Q}_{1}(\mathcal{K})$ denotes the space of bilinear polynomials on the element $\mathcal{K}$. We further define the subspace of test functions
\begin{equation}
\mathbb{V}_{h,0} = \{ \bfa{v}_h \in \mathbb{V}_h : \bfa{v}_h|_{\Gamma_D} = \bfzero \}.
\end{equation}

The discrete weak formulation seeks a displacement field $\bfa{u}_h \in \mathbb{V}_h$ such that the residual of the balance of momentum vanishes for all admissible test functions.

\begin{dwf}
Find $\bfa{u}_h \in \mathbb{V}_{h}$ (satisfying Dirichlet conditions on $\Gamma_D$) such that:
\begin{equation}\label{eq:discrete_residual}
\mathcal{R}(\bfa{u}_h; \bfa{v}_h) := a(\bfa{u}_h; \bfa{v}_h) - L(\bfa{v}_h) = 0, \quad \forall\, \bfa{v}_h \in \mathbb{V}_{h,0}.
\end{equation}
Here, the nonlinear semilinear form $a(\cdot; \cdot)$ and the linear functional $L(\cdot)$ are defined as:
\begin{align}
a(\bfa{u}_h; \bfa{v}_h) &:= \int_{\mathcal{D}} \bfT(\bfeps(\bfa{u}_h)) \cdot \bfeps(\bfa{v}_h) \, d\bfa{x}, \\
L(\bfa{v}_h) &:= \int_{\mathcal{D}} \bfa{f} \cdot \bfa{v}_h \, d\bfa{x} + \int_{\Gamma_N} \bfa{g} \cdot \bfa{v}_h \, ds,
\end{align}
where the stress tensor $\bfT$ is given by the constitutive law \eqref{eq:NewConstitutive}.
\end{dwf}

\subsection{Linearization and Newton's Method}

Since the form $\mathcal{R}(\bfa{u}_h; \bfa{v}_h)$ is nonlinear with respect to $\bfa{u}_h$, we employ the Newton-Raphson method to solve \eqref{eq:discrete_residual} iteratively. This requires the linearization of the residual equation. Let $\bfa{u}_h^k$ be the approximation at iteration $k$. We seek an increment $\delta \bfa{u}_h \in \mathbb{V}_{h,0}$ such that $\bfa{u}_h^{k+1} = \bfa{u}_h^k + \alpha^k \,  \delta \bfa{u}_h$ satisfies $\mathcal{R}(\bfa{u}_h^{k+1}; \bfa{v}_h) \approx 0$. By performing a Taylor series expansion and neglecting higher-order terms, we obtain the linearized problem:
\begin{equation}
\mathcal{R}(\bfa{u}_h^k; \bfa{v}_h) + D\mathcal{R}(\bfa{u}_h^k)[\delta \bfa{u}_h, \bfa{v}_h] = 0,
\end{equation}
where $D\mathcal{R}(\cdot)[\cdot, \cdot]$ is the G\^{a}teaux derivative.

To derive the linearized form, we differentiate the stress tensor $\bfT$ with respect to the displacement field. Recall the constitutive relation:
\begin{equation}
\bfT(\bfeps) = \frac{1}{E_1} \bfeps - E_2 \; f(\nabla \cdot \bfu) \; \nabla \cdot \bfu \; \mathbf{I} 
\end{equation}
 The scalar volumetric function is:
\begin{equation}
f(\xi) = \frac{1 + \beta \xi}{E_1 + d E_2 + d E_2 \beta \xi}.
\end{equation}
The directional derivative of the stress tensor in the direction of the increment $\delta \bfa{u}$ is:
\begin{equation}
\frac{\partial \bfT}{\partial \bfeps} : \bfeps(\delta \bfa{u}) = \frac{1}{E_1} \bfeps(\delta \bfa{u}) - E_2 \, \left[ f^{\prime}(\nabla \cdot \delta \bfa{u}) \, \nabla \cdot \bfa{u} +  \, f(\nabla \cdot \bfa{u}) \, \nabla \cdot \delta \bfa{u}  \right] \mathbf{I}.
\end{equation}
Consequently, the tangent bilinear form (Jacobian) is given by:
\begin{align}
\mathcal{J}(\bfa{u}_h^k; \delta \bfa{u}_h, \bfa{v}_h) &:= D\mathcal{R}(\bfa{u}_h^k)[\delta \bfa{u}_h, \bfa{v}_h] \notag \\
&= \int_{\mathcal{D}} \left[ \frac{1}{E_1} \bfeps(\delta \bfa{u}_h) \cdot \bfeps(\bfa{v}_h) - E_2 \,  \left[ f^{\prime}(\nabla \cdot \delta \bfa{u}) \, \nabla \cdot \bfa{u}^k +  \, f(\nabla \cdot \bfa{u}^k) \, \nabla \cdot \delta \bfa{u}  \right] \nabla \cdot \bfa{v}  \right] \, d\bfa{x}.
\end{align}

To effectively solve the nonlinear boundary value problem formulated above, we employ an iterative numerical strategy. The complexity of the constitutive relationship requires a robust linearization technique to resolve the dependence of the stress tensor on the deformation field. Consequently, we utilize a \textit{Newton-Raphson scheme augmented with a line search algorithm}. This approach ensures both quadratic convergence near the solution and global stability during the early iterations, particularly given the potential singularities in the material model. The detailed algorithmic steps for obtaining the discrete solution are outlined below.

\begin{Algorithm}[Newton-Raphson scheme with line search]
Initialize $\bfa{u}_h^0$ (e.g., solution to the linear problem with $\beta=0$). Set tolerance $\epsilon_{tol}$. \\
For $k = 0, 1, 2, \dots$:
\begin{enumerate}
    \item Compute the residual vector $\mathbf{R}^k$ and stiffness matrix $\mathbf{K}^k$ by assembling element contributions:
    \begin{equation}
    K_{ij}^k = \mathcal{J}(\bfa{u}_h^k; \bfa{\phi}_j, \bfa{\phi}_i), \quad R_i^k = -\mathcal{R}(\bfa{u}_h^k; \bfa{\phi}_i),
    \end{equation}
    where $\{\bfa{\phi}_i\}$ are the basis functions of $\mathbb{V}_{h,0}$.
    
    \item Solve the linear system for the search direction $\delta \mathbf{U}^k$:
    \begin{equation}
    \mathbf{K}^k \, \delta \mathbf{U}^k = \mathbf{R}^k.
    \end{equation}
    
    \item \textbf{Line Search:} Determine the step length $\alpha^k \in (0, 1]$ to ensure sufficient decrease in the residual. initialize $\alpha^k = 1$.
    \begin{itemize}
        \item While $\| \mathcal{R}(\bfa{u}_h^k + \alpha^k \delta \bfa{u}_h) \| > (1 - \gamma \alpha^k) \| \mathcal{R}(\bfa{u}_h^k) \|$:
        \begin{equation*}
        \alpha^k \leftarrow \rho \, \alpha^k,
        \end{equation*}
        where $\gamma \in (0, 1)$ is a sufficient decrease parameter (e.g., $10^{-4}$) and $\rho \in (0, 1)$ is a reduction factor (e.g., $0.5$).
    \end{itemize}
    
    \item Update the solution: $\bfa{u}_h^{k+1} = \bfa{u}_h^k + \alpha^k \, \delta \bfa{u}_h$.
    
    \item Check convergence: If $\| \mathbf{R}^{k+1} \| < \epsilon_{tol} \| \mathbf{R}^0 \|$, stop.
\end{enumerate}
\end{Algorithm}
The use of the exact stiffness matrix $\mathbf{K}^k$ combined with a globalization strategy via line search ensures robust convergence, even when the initial guess is far from the solution or near the constitutive singularity.

\subsection{Computational framework: The $hp$-adaptive finite element method}

Efficiently resolving the multiscale features inherent in fracture mechanics—specifically, the sharp gradients near the crack tip contrasted with the smooth far-field behavior—requires a discretization strategy that transcends fixed-grid approaches. In this work, we employ the $hp$-adaptive finite element method. Unlike standard adaptive schemes that rely solely on mesh subdivision ($h$-adaptivity) or uniform polynomial enrichment ($p$-adaptivity), the $hp$-method dynamically optimizes both the element size, $h$, and the approximation order, $p$. This synergistic approach allows for exponential convergence rates (spectral accuracy) in smooth regions while robustly resolving singularities via geometric refinement, minimizing the total degrees of freedom required for a target accuracy level \cite{bangerth2013adaptive, demkowicz2006computing}.

\subsubsection{Mechanisms of refinement and spectral enrichment}
The $hp$-framework operates by arbitrating between two distinct modes of convergence based on the local regularity of the solution:

\begin{enumerate}
    \item \textit{Spatial discretization ($h$-refinement):} This mode involves the recursive subdivision of finite elements. It is the dominant strategy for capturing low-regularity features, such as the singular strain fields characteristic of the fracture process zone or geometric discontinuities at re-entrant corners \cite{verfurth1994posteriori}. While $h$-refinement offers algebraic convergence rates, it is indispensable for isolating non-smooth behaviors that cannot be efficiently resolved by high-order polynomials alone.
    
    \item \textit{Spectral enrichment ($p$-refinement):} In subdomains where the solution is analytic or globally smooth, increasing the polynomial degree $p$ of the shape functions yields significantly faster error decay compared to mesh refinement. This spectral convergence allows the solver to capture complex, smooth variations in the elastic field without the prohibitive cost of a dense mesh \cite{duster2001p, babuska1981p}.
\end{enumerate}

\subsubsection{The Kelly error estimator}
To drive the adaptive process, we require a quantitative measure of the local discretization error. We utilize a residual-based a posteriori error estimator, specifically the Kelly error indicator \cite{kelly1983posteriori}. This estimator approximates the error in the energy norm by evaluating the violation of continuity requirements across inter-element boundaries.

Mathematically, the exact solution $u$ must satisfy flux continuity across all internal interfaces. However, the discrete finite element solution, $u_h$, typically exhibits discontinuities in its gradient, $\nabla u_h$, across element faces. The Kelly estimator, $\eta_K$, quantifies this lack of conformity for a cell $K$ as:
\begin{equation}
\eta_K = \left( \frac{h_K}{24} \int_{\partial K} \left[ \jump{\nabla u_h \cdot \mathbf{n}} \right]^2 \, ds \right)^{1/2},
\end{equation}
where $h_K$ denotes the characteristic cell diameter, $\mathbf{n}$ is the unit normal vector, and $\jump{\cdot}$ represents the jump operator across the face $\partial K$. 

\paragraph{Physical interpretation in fracture mechanics:}
The term $\jump{\nabla u_h \cdot \mathbf{n}}$ represents the jump in the normal flux (or traction components in elasticity) across element boundaries. In the context of the strain-limiting fracture model, regions of high curvature—such as the crack tip vicinity—generate large flux discontinuities when under-resolved. The Kelly estimator naturally identifies these regions as areas of high error, ensuring that the adaptive algorithm concentrates computational resources precisely at the singular front, rather than wasting degrees of freedom in the far field.

\subsubsection{The $hp$-algorithm}
The efficacy of the $hp$-method hinges on a robust marking strategy that not only identifies \textit{where} to refine (based on the magnitude of $\eta_K$) but determines \textit{how} to refine (choosing between $h$ and $p$). The implemented strategy follows a four-stage cycle:\\
\begin{center}
 \textsc{Solve} $\to$ \textsc{Estimate} $\to$ \textsc{Mark} $\to$ \textsc{Refine}.
\end{center}
~\\
The decision to apply $h$- versus $p$-refinement is governed by a regularity estimator. This algorithm analyzes the decay rate of the Legendre expansion coefficients of the local solution. 
\begin{itemize}
    \item \textit{High regularity:} If the coefficients decay rapidly, the solution is deemed locally smooth. The algorithm marks the cell for $p$-enrichment, leveraging spectral convergence.
    \item \textit{Low regularity:} If the coefficient decay is slow (indicating a singularity or sharp gradient), the algorithm flags the cell for $h$-subdivision, isolating the singularity geometrically.
\end{itemize}
This dynamic selection ensures that the mesh topology and polynomial distribution evolve to match the underlying physics of the strain-limiting solid. We utilize the \texttt{deal.II} library \cite{arndt2021deal} to manage the complex data structures required for such multi-constraint, non-conforming meshes.

\section{Numerical results and discussion}\label{sec:results}
In this section, we substantiate the theoretical properties of the proposed isotropic model with density-dependent moduli through rigorous numerical experimentation. The focus is placed on characterizing the impact of the constitutive nonlinearity on the crack-tip fields. Specifically, we examine how the density-dependent stiffening (or softening) alters the stress distribution compared to the predictions of classical linear elastic fracture mechanics (LEFM). The governing equations for the static equilibrium of the solid represent a system of second-order quasilinear partial differential equations. To address the inherent nonlinearity arising from the volumetric strain dependence, we implement the damped Newton-Raphson iterative scheme (with line search) within an $hp$-adaptive framework. This requires the consistent derivation of the Fréchet derivative (the Jacobian) of the residual operator. At every Newton step, a linearized system is solved, and the mesh is dynamically adapted to capture the high gradients in the evolving fracture process zone.

We simulate a canonical mode-I, mode-II, and mixed mode crack problem to benchmark the solver. This setup is essential because the proposed nonlinearity is driven by volumetric strain ($\nabla \cdot \bfu$), which is dominant in opening-mode fractures. As demonstrated in the subsequent results, the $hp$-adaptive discretization successfully resolves the complex field gradients at the crack tip, verifying the stability of the algorithm even in the presence of constitutive singularities. The computational procedure follows the steps summarized in Algorithm \ref{alg:hp_adaptive_fem}. The entire computational code is developed using \textsf{deal.II} library. To illustrate the response of the proposed model, we consider the following loading scenarios.

\begin{figure}[H]
    \centering
    \tikzset{
        mainbody/.style={thick, draw=black}, 
        crackline/.style={line width=2pt, draw=red},
        forcearrow/.style={->, >=latex, thick, blue}
    }
    \begin{minipage}[b]{0.32\textwidth}
        \centering
        \begin{tikzpicture}[scale=3.5]
            \draw[mainbody] (0,0) rectangle (1,1);
            \draw[crackline] (1.0, 0.5) -- (0.5, 0.5);
            \fill[red] (0.5, 0.5) circle (0.5pt); 
            \foreach \x in {0,0.2,...,1.0} \draw (\x,0) -- (\x-0.05,-0.05);
            \node[below, font=\tiny] at (0.5, -0.05) {$\mathbf{u}=\mathbf{0}$};
            \foreach \x in {0,0.2,...,1.0} \draw[forcearrow] (\x, 1) -- (\x, 1.2);
            \node[blue, above, font=\tiny] at (0.5, 1.2) {$\mathbf{u}=(0, \bar{v})$};
            \node[font=\small\bfseries] at (0.5, -0.3) {(a) Tensile};
        \end{tikzpicture}
    \end{minipage}
    \hfill
    \begin{minipage}[b]{0.32\textwidth}
        \centering
        \begin{tikzpicture}[scale=3.5]
            \draw[mainbody] (0,0) rectangle (1,1);
            \draw[crackline] (1.0, 0.5) -- (0.5, 0.5);
            \fill[red] (0.5, 0.5) circle (0.5pt); 
            \foreach \x in {0,0.2,...,1.0} \draw (\x,0) -- (\x-0.05,-0.05);
            \node[below, font=\tiny] at (0.5, -0.05) {$\mathbf{u}=\mathbf{0}$};
            \foreach \x in {0,0.2,...,1.0} \draw[forcearrow] (\x, 1) -- (\x+0.2, 1);
            \node[blue, above, font=\tiny] at (0.5, 1.05) {$\mathbf{u}=(\bar{u}, 0)$};
            \node[font=\small\bfseries] at (0.5, -0.3) {(b) Shear};
        \end{tikzpicture}
    \end{minipage}
    \hfill
    \begin{minipage}[b]{0.32\textwidth}
        \centering
        \begin{tikzpicture}[scale=3.5]
            \draw[mainbody] (0,0) rectangle (1,1);
            \draw[crackline] (1.0, 0.5) -- (0.5, 0.5);
            \fill[red] (0.5, 0.5) circle (0.5pt); 
            \foreach \x in {0,0.2,...,1.0} \draw (\x,0) -- (\x-0.05,-0.05);
            \node[below, font=\tiny] at (0.5, -0.05) {$\mathbf{u}=\mathbf{0}$};
            \foreach \x in {0,0.2,...,1.0} \draw[forcearrow] (\x, 1) -- (\x+0.15, 1.2);
            \node[blue, above, font=\tiny] at (0.5, 1.2) {$\mathbf{u}=(\bar{u}, \bar{v})$};
            \node[font=\small\bfseries] at (0.5, -0.3) {(c) Mixed};
        \end{tikzpicture}
    \end{minipage}

    \caption{Schematic representation of the boundary value problems for (a) tensile, (b) shear, and (c) mixed-mode loading.}
    \label{fig:loading_scenarios}
\end{figure}
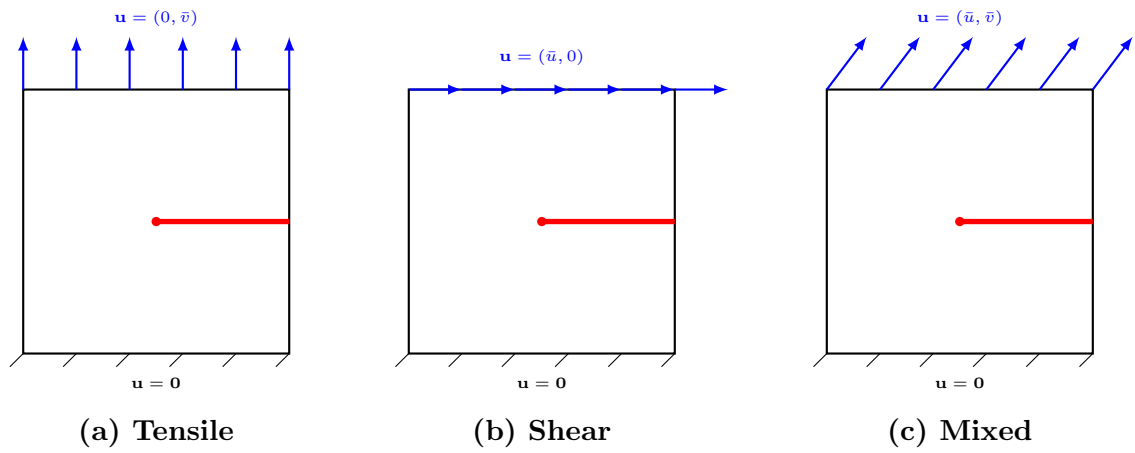

\begin{algorithm}[H]
    \caption{Staggered $hp$-Adaptive Newton-Raphson Scheme}
    \label{alg:hp_adaptive_fem}
    \begin{algorithmic}[1]
        \State \textbf{Input:} Initial coarse mesh $\mathcal{T}_0$, initial polynomial distribution $\mathbb{Q}_0$, tolerance $\epsilon_{\text{tol}}$.
        \State \textbf{Initialize:} Adaptive step $k \leftarrow 0$; Initial solution guess $\bfa{u}_{h,0}$.
        
        \Repeat \Comment{\textbf{Outer Adaptive Loop}}
            \State Initialize Newton iteration $n \leftarrow 0$; Set iterate $\bfa{u}^{(0)} \leftarrow \bfa{u}_{h,k}$.
            
            \Repeat \Comment{\textbf{Inner Nonlinear Solver}}
                \State Assemble global Tangent Stiffness $\mathbf{K}^{(n)}$ and Residual vector $\mathbf{R}^{(n)}$:
                $$ \mathbf{K}^{(n)}_{ij} = \mathcal{J}(\bfa{u}^{(n)}; \bfa{\phi}_j, \bfa{\phi}_i), \quad \mathbf{R}^{(n)}_i = -\mathcal{R}(\bfa{u}^{(n)}; \bfa{\phi}_i) $$
                
                \State Solve linear system for the increment $\delta \bfa{u}^{(n)}$:
                $$ \mathbf{K}^{(n)} \delta \bfa{u}^{(n)} = \mathbf{R}^{(n)} $$
                
                \State \textbf{Line Search:} Find step length $\alpha^{(n)} \in (0,1]$ via backtracking:
                \State \quad While $\| \mathcal{R}(\bfa{u}^{(n)} + \alpha^{(n)} \delta \bfa{u}^{(n)}) \| > (1 - \gamma \alpha^{(n)}) \| \mathbf{R}^{(n)} \|$, set $\alpha^{(n)} \leftarrow \rho \alpha^{(n)}$.
                
                \State Update solution: $\bfa{u}^{(n+1)} \leftarrow \bfa{u}^{(n)} + \alpha^{(n)} \delta \bfa{u}^{(n)}$.
                \State $n \leftarrow n + 1$.
            \Until{$\| \mathbf{R}^{(n)} \| < \epsilon_{\text{Newton}} \| \mathbf{R}^{(0)} \|$}
            
            \State Set converged solution on current mesh: $\bfa{u}_{h,k} \leftarrow \bfa{u}^{(n)}$.
            
            \State \textbf{Error Estimation:}
            \State \quad Compute local error indicators $\eta_K(\bfa{u}_{h,k})$ for all $K \in \mathcal{T}_k$.
            \State \quad Compute global error estimate $\mathcal{E}_{global} = \left(\sum_{K \in \mathcal{T}_k} \eta_K^2\right)^{1/2}$.
            
            \If{$\mathcal{E}_{global} < \epsilon_{\text{tol}}$}
                \State \textbf{Converged:} Break adaptive loop.
            \EndIf
            
            \State \textbf{Marking \& Refinement:}
            \State \quad Identify elements for refinement: $\mathcal{M}_h$ (geometric) and $\mathcal{M}_p$ (polynomial) based on smoothness indicators.
            \State \quad Generate new mesh: $\mathcal{T}_{k+1} \leftarrow \text{Refine}(\mathcal{T}_k, \mathcal{M}_h)$.
            \State \quad Update ansatz space: $\mathbb{Q}_{k+1} \leftarrow \text{Enrich}(\mathbb{Q}_k, \mathcal{M}_p)$.
            \State \quad \textbf{Transfer:} Project $\bfa{u}_{h,k}$ to the new mesh $\mathcal{T}_{k+1}$ to serve as initial guess $\bfa{u}_{h, k+1}$.
            \State $k \leftarrow k + 1$.
            
        \Until{Max DOFs or iterations reached}
        \State \textbf{Output:} Final displacement field $\bfa{u}_{h,k}$.
    \end{algorithmic}
\end{algorithm}

\subsection{Mode I: Uniaxial tensile loading}
We investigate the behavior of a static crack within an elastic solid governed by the proposed nonlinear constitutive law, specifically under Mode I tensile loading. The resulting system of vector-valued nonlinear partial differential equations is approximated using the $hp$-adaptive finite element framework detailed in Algorithm~\ref{alg:hp_adaptive_fem}. The computational domain and the corresponding boundary conditions are illustrated in Figure~\ref{fig:loading_scenarios}(a). The adaptive process was executed for a minimum of 10 refinement cycles, allowing the basis functions to reach a maximum polynomial degree of $p=6$. Figure~\ref{c_plot_tensile} depicts the convergence history of the adaptive solution. We observe an asymptotic convergence rate of approximately $0.64$; considering the strong singularity inherent to the nonlinear crack-tip field, this rate demonstrates the robustness and acceptability of the numerical scheme.

\begin{figure}[H]
\centering
\includegraphics[width=0.6\textwidth]{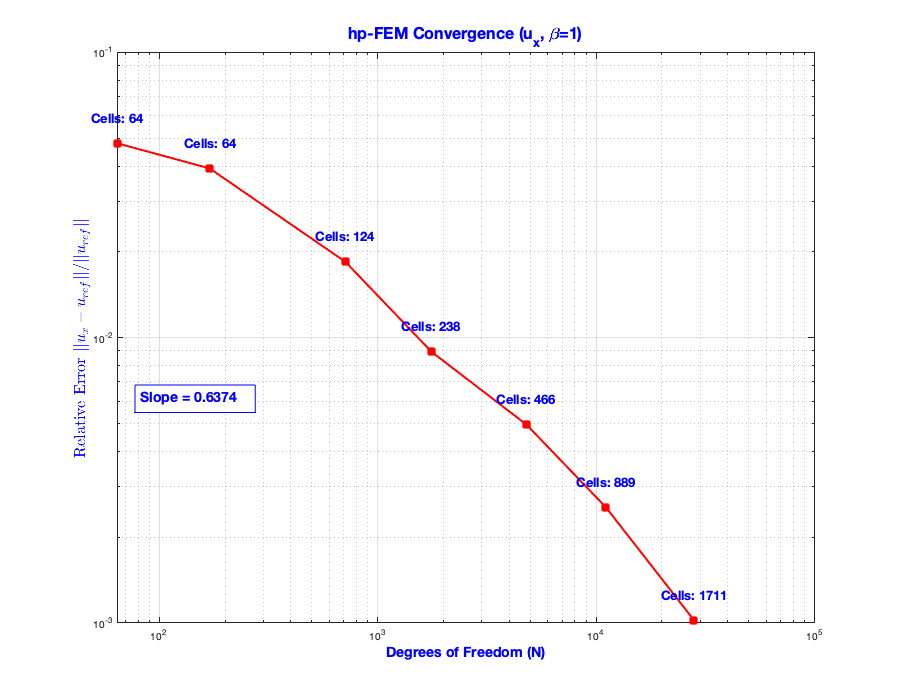} 
\caption{Convergence plot for the case of tensile load. }
\label{c_plot_tensile}
\end{figure}

\begin{figure}[H]
    \centering
    \begin{subfigure}[b]{0.48\textwidth}
        \centering
        \includegraphics[width=\textwidth]{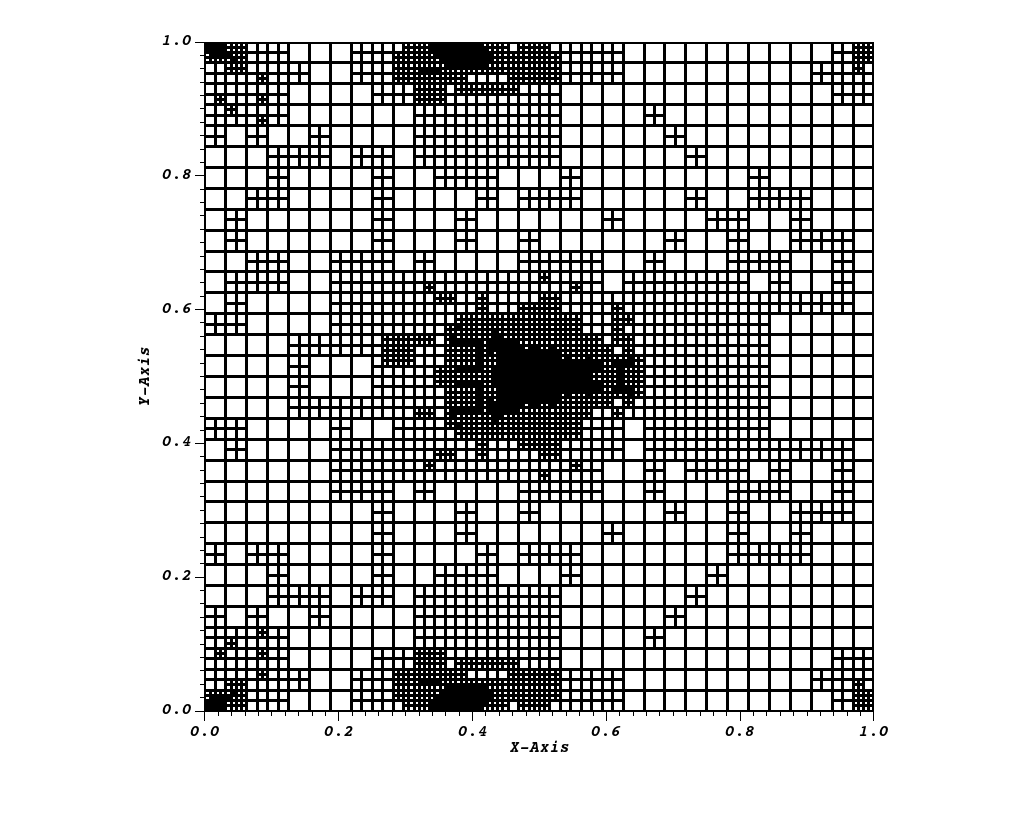}
        \caption{Mesh}
        \label{tensile_mesh}
    \end{subfigure}
    \hfill 
    \begin{subfigure}[b]{0.48\textwidth}
        \centering
        \includegraphics[width=\textwidth]{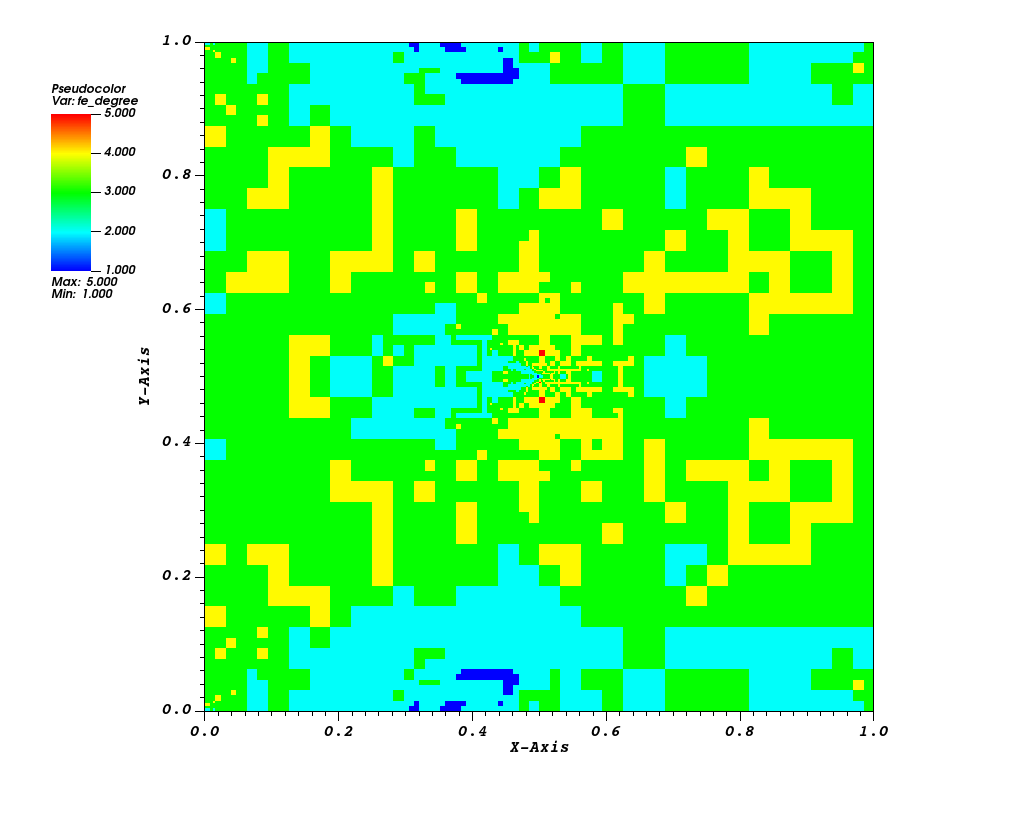}
        \caption{Polymomial degree}
        \label{fedeg_tenisle}
    \end{subfigure}  
    \caption{The state of the discretization after 8 adaptive refinement steps under tensile loading. (a) shows the geometrically graded mesh concentrating on the crack tip, while (b) shows the corresponding distribution of polynomial orders ranging from $p=1$ (blue) to $p=5$ (orange). }
    \label{fig:hp_mesh_results}
\end{figure}

The efficacy of the proposed $hp$-adaptive refinement strategy is visually demonstrated in Figure \ref{fig:hp_mesh_results}, which illustrates the discrete state of the domain after 8 adaptive iterations under tensile loading. Figure \ref{fig:hp_mesh_results}(a) depicts the final computational mesh, revealing a highly localized geometric ($h$-) refinement focused intensely around the crack tip at $(0.5, 0.5)$. This aggressive subdivision is driven by the error estimator to resolve the high stress gradients and the singularity characteristic of the nonlinear fracture mechanics problem. Complementing the mesh topology, Figure \ref{fig:hp_mesh_results}(b) presents the spatial distribution of the polynomial degrees ($p$-levels). The algorithm autonomously identifies the low-regularity region near the crack tip, assigning lower-order basis functions ($p=1$, indicated in blue) to the smallest elements to maintain stability. Conversely, in the far-field regions where the deformation field is smooth, the method enriches the approximation space with higher-order polynomials (up to $p=5$, indicated in yellow/orange). This dual strategy of creating a graded mesh with variable polynomial orders confirms that the solver is effectively optimizing the degrees of freedom to capture the complex physics of the density-dependent constitutive model.

\begin{figure}[H]
    \centering
    \begin{subfigure}[b]{0.48\textwidth}
        \centering
        \includegraphics[width=\textwidth]{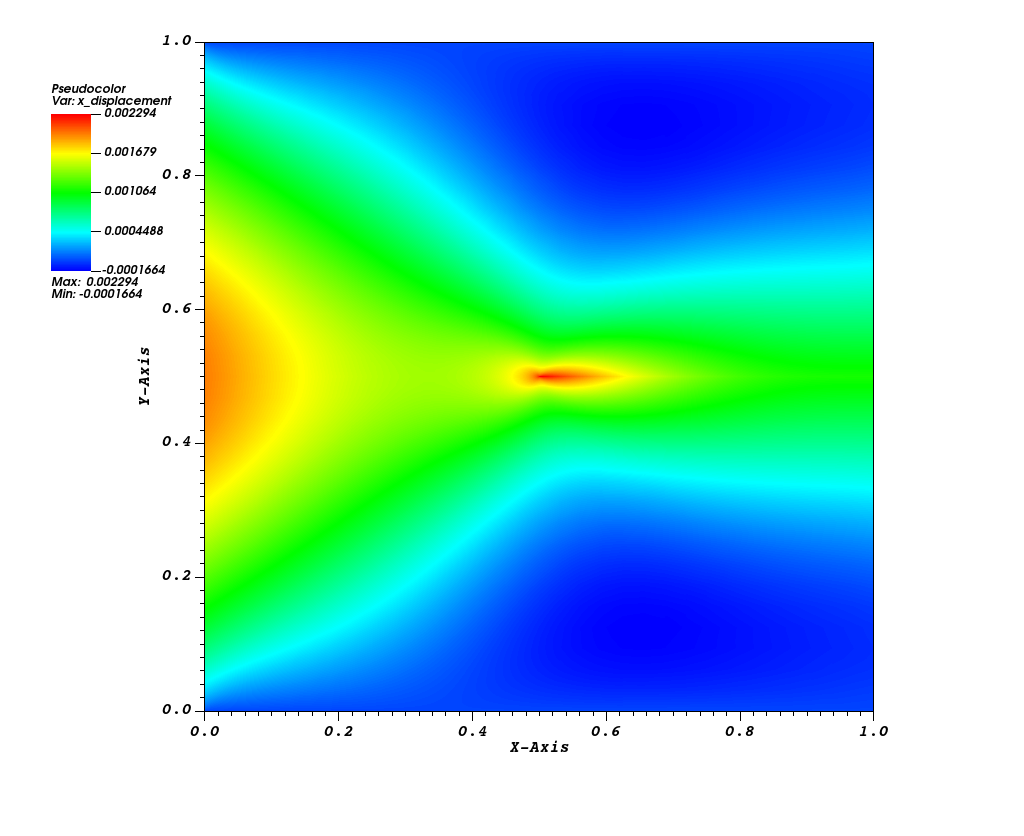}
        \caption{$\bfu_x$}
        \label{tensile_ux}
    \end{subfigure}
    \hfill 
    \begin{subfigure}[b]{0.48\textwidth}
        \centering
        \includegraphics[width=\textwidth]{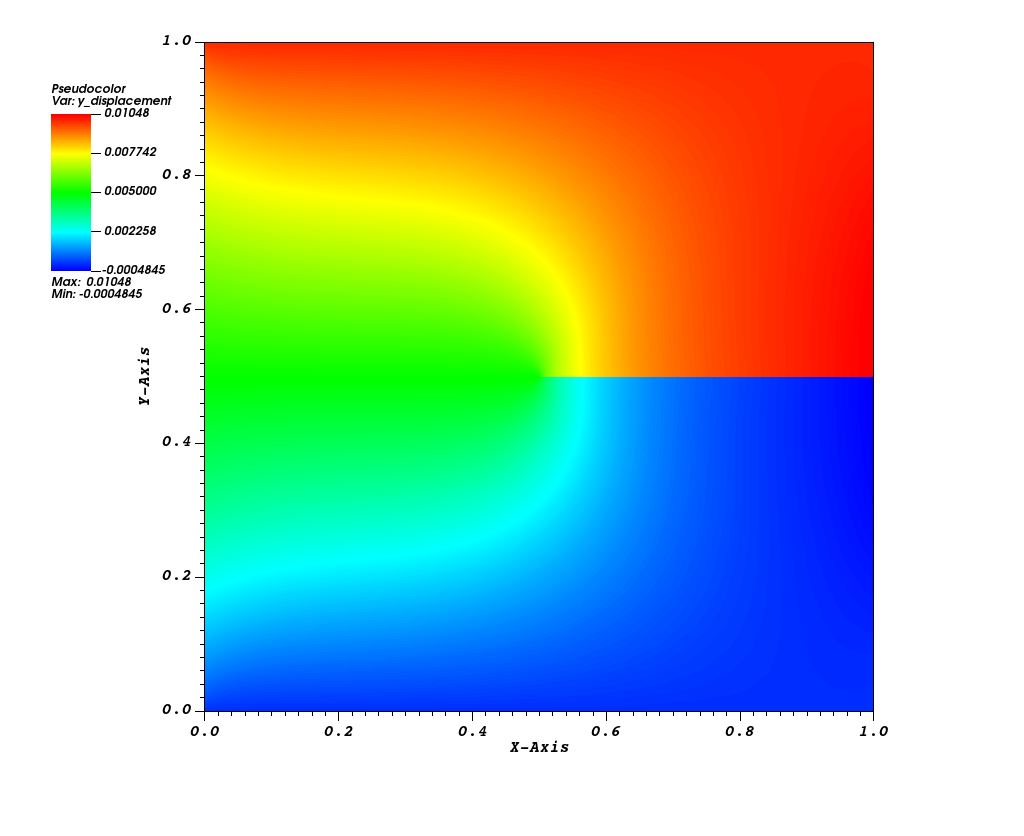}
        \caption{$\bfu_y$}
        \label{tensile_uy}
    \end{subfigure}  
\caption{Computed displacement fields for the tensile test problem. (a) shows the lateral contraction and tip deformation in the horizontal direction, while (b) clearly illustrates the crack opening discontinuity along the interface $y=0.5$.}
    \label{ux_uy_tensile}
\end{figure}

The computed displacement fields, obtained after 8 adaptive refinement steps, are visualized in Figure \ref{ux_uy_tensile}. Figure \ref{ux_uy_tensile}(b) displays the vertical displacement component ($\bfu_y$), which captures the characteristic kinematics of Mode I fracture. A sharp discontinuity is evident along the branch cut ($x \in [0.5, 1.0], y=0.5$), representing the physical crack opening where the upper flank lifts under the applied tensile load while the lower flank remains constrained. This discontinuity resolves smoothly into a continuous field ahead of the crack tip at $(0.5, 0.5)$, confirming that the adaptive mesh successfully captures the geometry. Complementing this, Figure \ref{ux_uy_tensile}(a) presents the horizontal displacement ($\bfu_x$). It highlights the lateral Poisson contraction associated with the uniaxial extension, as well as the localized deformation gradients in the immediate vicinity of the crack tip. The absence of spurious oscillations near the singularity in both fields attests to the stability of the $hp$-adaptive scheme under the proposed nonlinear constitutive law.

\begin{figure}[H]
\centering
\includegraphics[width=\textwidth]{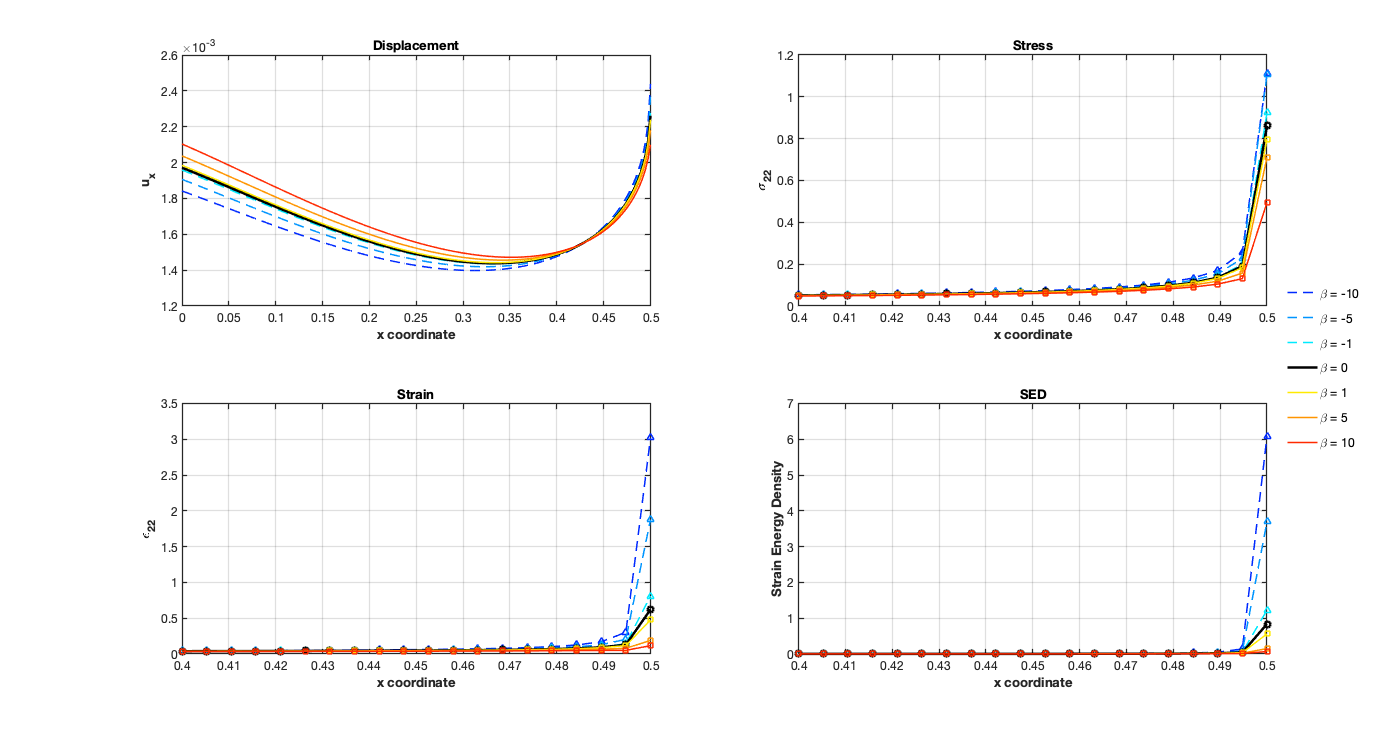} 
\caption{Distribution of field variables along the crack ligament ($y=0.5$) approaching the crack tip at $x=0.5$. The panels display (a) horizontal displacement $\bfu_x$, (b) vertical stress $\bfT_{22}$, (c) vertical strain $\bfeps_{22}$, and (d) Strain Energy Density (SED). The results highlight the contrast between the singularity-enhancing effects of negative $\beta$ (dashed lines) and the stress-relieving effects of positive $\beta$ (solid lines) compared to the linear elastic case ($\beta=0$, black line). }
\label{beta_plot_tensile}
\end{figure}

The numerical results presented in Figure \ref{beta_plot_tensile} elucidate the critical influence of the nonlinearity parameter $\beta$ on the near-tip fields. The plots compare the evolution of displacement ($\bfu_x$), vertical stress ($\bfT_{22}$), vertical strain ($\bfeps_{22}$), and SED along the ligament leading to the crack tip ($x \to 0.5$). A distinct dichotomy is observed between the negative and positive regimes of $\beta$. For negative values (e.g., $\beta = -10$, represented by dashed blue lines), the material exhibits a marked amplification of the crack-tip singularity. As predicted by the theoretical constraints, the interaction between a negative $\beta$ and the material constants pushes the constitutive denominator toward a critical value, resulting in a sharp elevation of both stress and strain well beyond the linear elastic baseline ($\beta=0$). Notably, the strain concentration ($\bfeps_{22}$) is disproportionately higher than the stress concentration, leading to a substantial accumulation of strain energy density. Conversely, positive values of $\beta$ (solid orange/red lines) appear to induce a screening or regularizing effect, effectively attenuating the singularity and resulting in field intensities lower than those predicted by linear elasticity. This parametric study confirms that $\beta$ acts as a tunable control parameter, capable of transitioning the material response from a highly singular, brittle-like state to a more compliant, energy-dissipating state.

\subsection{Mode II: Pure shear loading}
The discretization profile resulting from the $hp$-adaptive algorithm under pure shear (Mode II) loading is illustrated in Figure \ref{m_fedeg_shear}. Figure \ref{m_fedeg_shear}(a) displays the final $h$-refined mesh. Similar to the tensile case, the refinement is heavily concentrated around the crack tip at $(0.5, 0.5)$ to resolve the primary singularity. However, distinctive refinement patterns are also visible at the domain corners; these arise due to the stress concentrations induced by the shear displacement constraints interacting with the clamped boundaries. Figure \ref{m_fedeg_shear}(b) maps the corresponding polynomial degree distribution. The adaptive strategy effectively lowers the approximation order to $p=1$ (blue/cyan) in the singular zones to maintain stability and accuracy, while elevating the order to $p=4$ (red) in the smoother far-field regions. This distribution verifies that the error estimator correctly identifies the changing regularity of the solution driven by the shear-induced asymmetry.

\begin{figure}[H]
    \centering
    \begin{subfigure}[b]{0.48\textwidth}
        \centering
        \includegraphics[width=\textwidth]{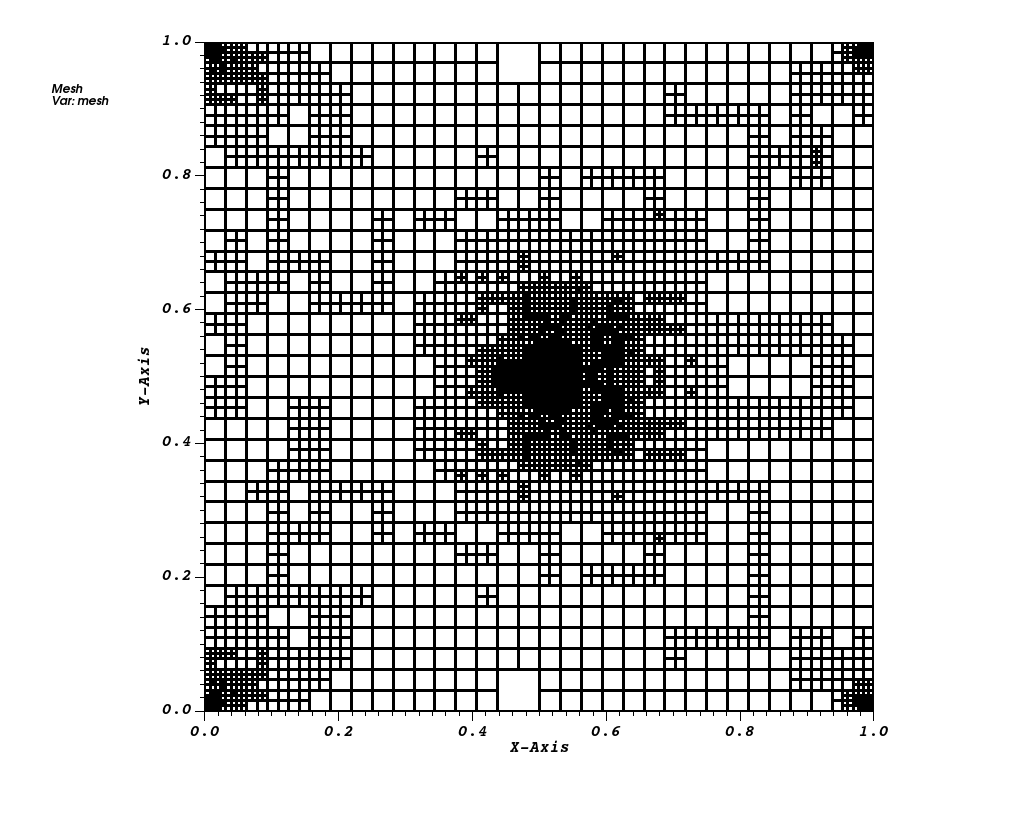}
        \caption{Mesh}
        \label{tensile_mesh}
    \end{subfigure}
    \hfill 
    \begin{subfigure}[b]{0.48\textwidth}
        \centering
        \includegraphics[width=\textwidth]{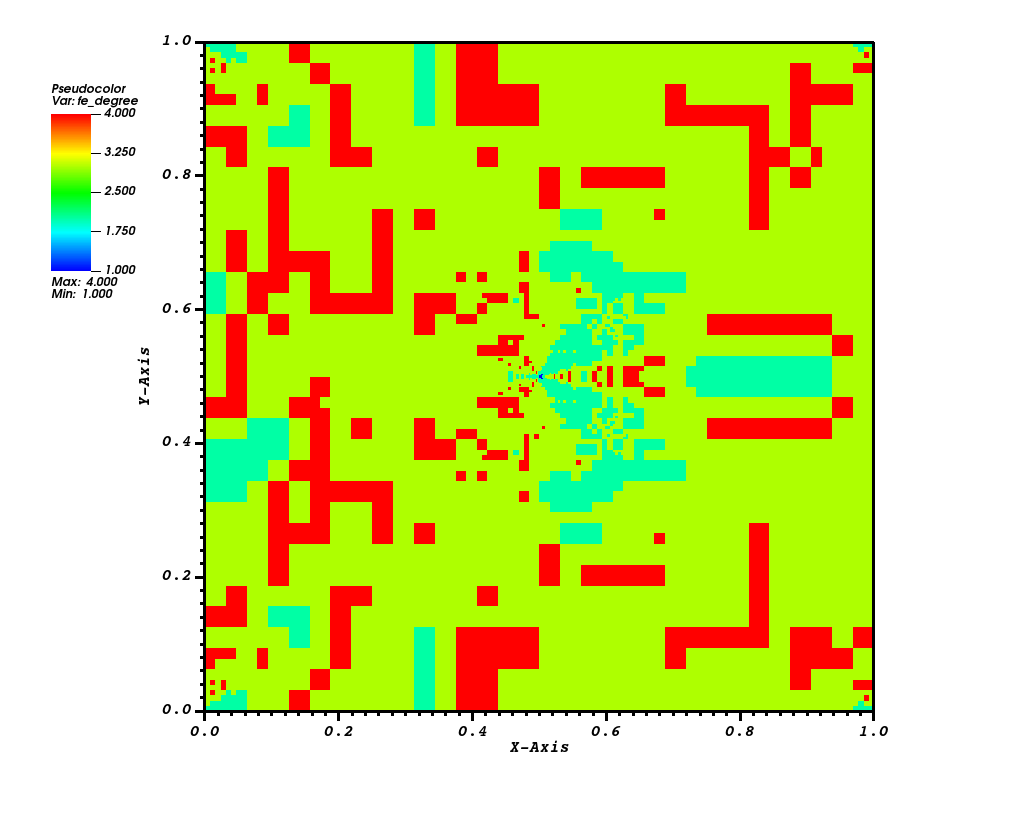}
        \caption{Polymomial degree}
        \label{fedeg_tenisle}
    \end{subfigure}  
    \caption{Adaptive discretization results for the shear loading (Mode II) case. (a) shows the mesh refinement concentrated at the crack tip and domain corners, while (b) illustrates the gradation of polynomial degrees, with lower orders near the singularity and higher orders in the smooth bulk regions.}
    \label{m_fedeg_shear}
\end{figure}

The computed displacement components for the Mode II shear problem are presented in Figure \ref{ux_uy_shear}. Figure \ref{ux_uy_shear}(a) illustrates the horizontal displacement $\bfu_x$, which characterizes the primary kinematic mechanism of shear fracture. A sharp discontinuity is evident along the crack faces ($x \in [0.5, 1.0], y=0.5$), representing the relative sliding of the upper and lower flanks in opposite directions. This jump in $\bfu_x$ confirms that the adaptive solver correctly captures the sliding mode without enforcing artificial continuity across the branch cut. In contrast, Figure \ref{ux_uy_shear}(b) depicts the vertical displacement $\bfu_y$. Unlike the tensile case, the vertical field remains continuous across the crack interface, indicating that there is no significant crack opening (Mode I contribution). Instead, the contours reveal the rotational deformation pattern and the anti-symmetric distribution of vertical material movement required to accommodate the shear distortion around the tip.

\begin{figure}[H]
    \centering
    \begin{subfigure}[b]{0.48\textwidth}
        \centering
        \includegraphics[width=\textwidth]{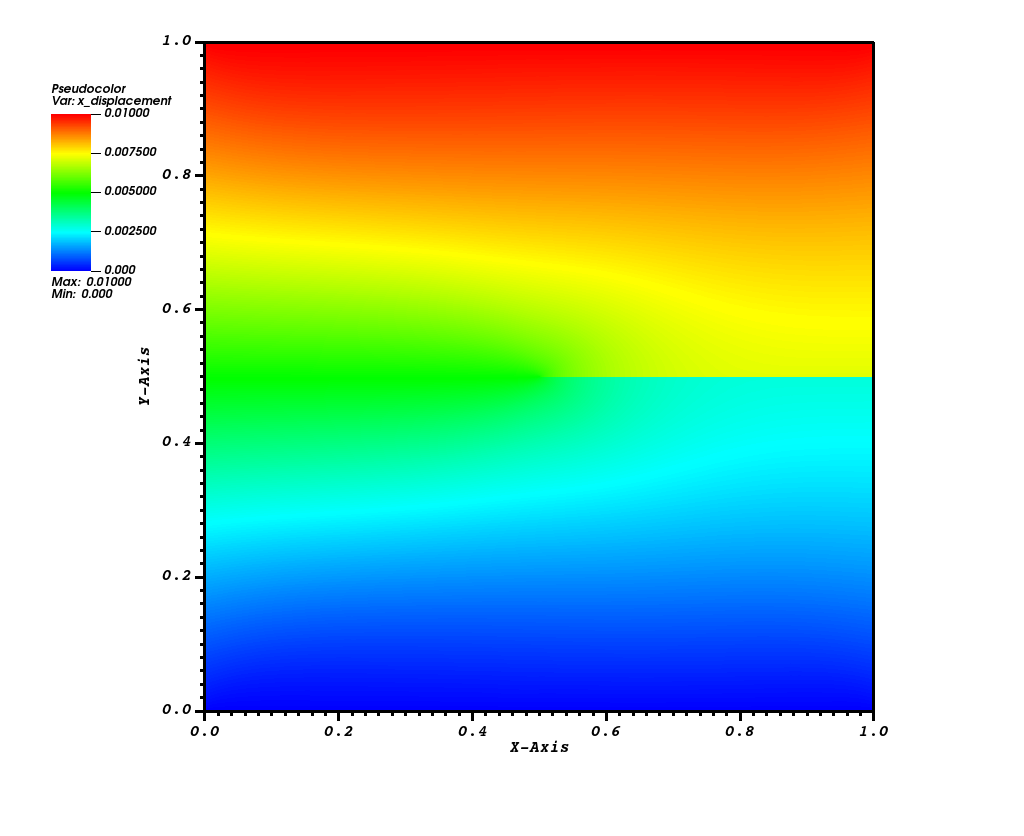}
        \caption{$\bfu_x$}
        \label{tensile_ux}
    \end{subfigure}
    \hfill 
    \begin{subfigure}[b]{0.48\textwidth}
        \centering
        \includegraphics[width=\textwidth]{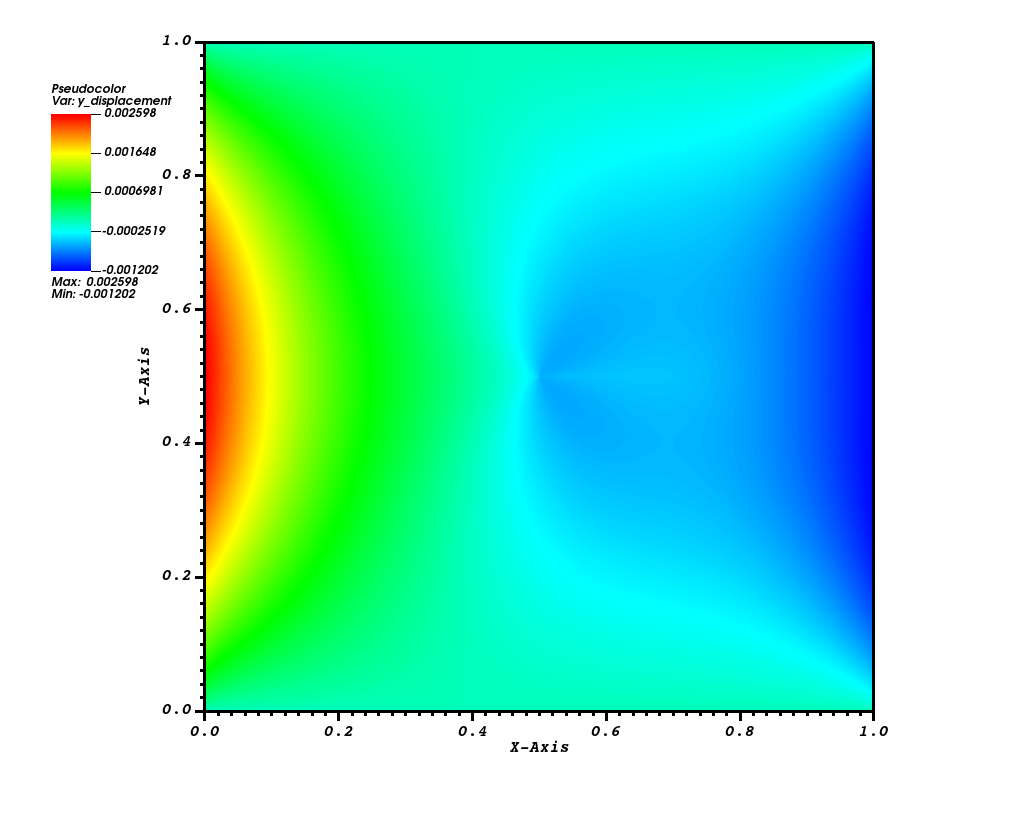}
        \caption{$\bfu_y$}
        \label{tensile_uy}
    \end{subfigure}  
    \caption{Displacement fields under pure shear (Mode II) loading. (a) illustrates the characteristic sliding discontinuity along the crack line in the horizontal direction, while (b) shows the continuous vertical deformation field, confirming the absence of crack opening. }
    \label{ux_uy_shear}
\end{figure}

The sensitivity of the shear-induced crack-tip fields to the nonlinearity parameter $\beta$ is examined in Figure \ref{beta_plot_shear}. The plots depict the variation of field variables along the crack ligament ($y=0.5$) as they approach the tip. A notable feature of this nonlinear model is the development of significant normal stress ($\bfT_{22}$) and vertical strain ($\bfeps_{22}$) components near the tip, even under pure shear boundary conditions. As shown in Figure \ref{beta_plot_shear}(b) and (c), these components exhibit a compressive (negative) singularity. Consistent with the tensile results, a negative $\beta$ (dashed lines) markedly amplifies this singularity, resulting in sharp compressive spikes and a corresponding surge in the Strain Energy Density (Figure \ref{beta_plot_shear}(d)). Conversely, positive values of $\beta$ (solid lines) act as a regularization mechanism, effectively suppressing the magnitude of the compressive stresses and strains, thereby reducing the localized energy concentration compared to the linear elastic baseline ($\beta=0$).

\begin{figure}[H]
\centering
\includegraphics[width=\textwidth]{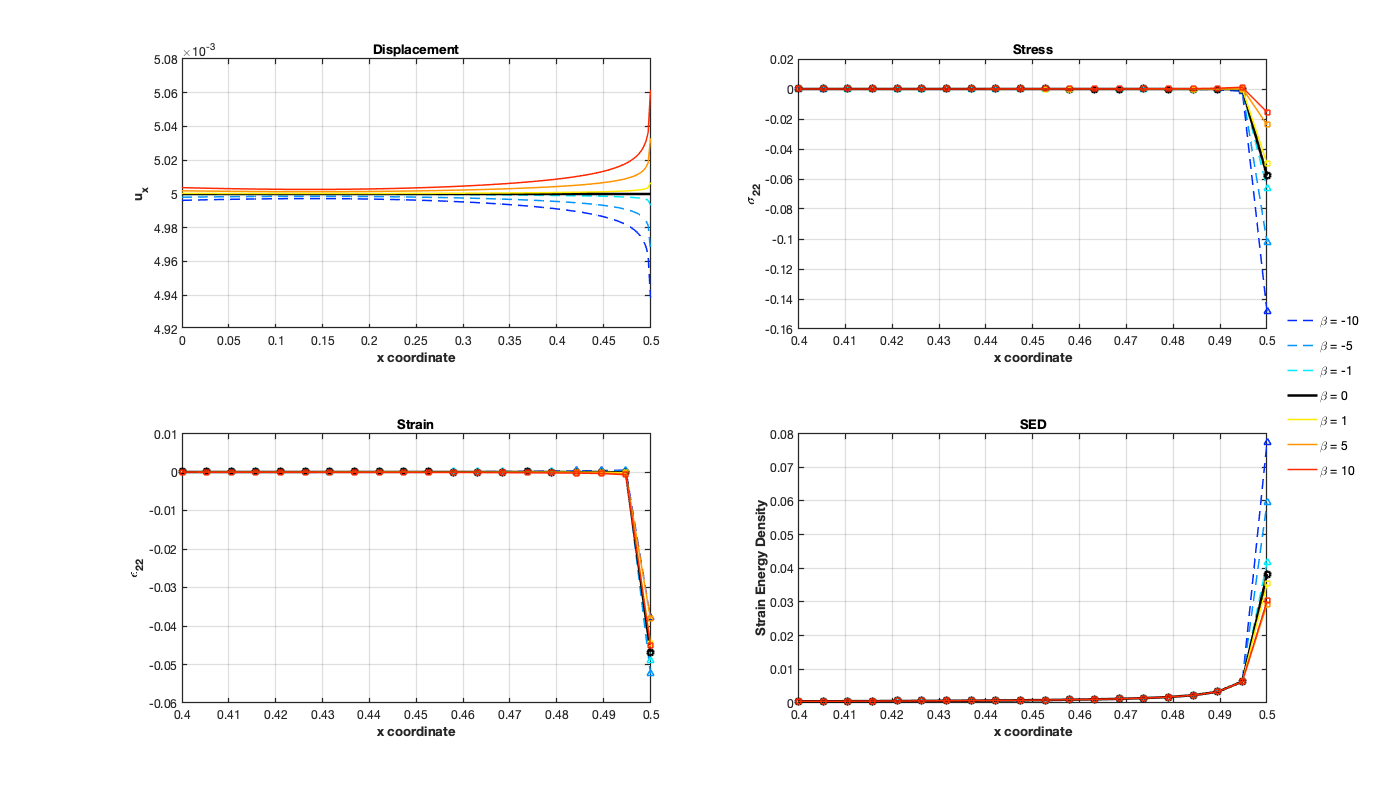} 
\caption{Effect of the nonlinearity parameter $\beta$ on the crack-tip fields under Mode II shear loading. The panels show (a) horizontal displacement $\bfu_x$, (b) vertical stress $\bfT_{22}$, (c) vertical strain $\bfeps_{22}$, and (d) SED along the ligament. Note the emergence of compressive (negative) normal stresses and strains near the tip. Negative $\beta$ values (dashed lines) intensify the singularity, leading to higher energy densities, while positive $\beta$ values (solid lines) mitigate the field concentrations.}
\label{beta_plot_shear}
\end{figure}

\subsection{Mixed-Mode Loading}
The discretization profile generated by the $hp$-adaptive algorithm under mixed-mode (combined tensile and shear) loading is shown in Figure \ref{mixed}. Figure \ref{mixed}(a) depicts the final $h$-refined mesh. The refinement pattern exhibits a distinct asymmetry compared to the pure mode cases; while the crack tip at $(0.5, 0.5)$ remains the primary focus of element subdivision to resolve the singularity, significant refinement also emerges at the top corners of the domain. This corner refinement is a direct response to the complex boundary conditions where both vertical and horizontal displacements are prescribed, creating localized stress concentrations that the error estimator seeks to resolve. Figure \ref{mixed}(b) illustrates the corresponding polynomial degree distribution. The algorithm assigns lower-order polynomials ($p=1$, blue/cyan) to the singular tip elements and the high-gradient corner regions to ensure stability, while effectively elevating the polynomial order ($p=4$ to $5$, yellow/orange) in the smoother bulk of the domain. This optimized distribution of $h$ and $p$ verifies the solver's capability to adaptively capture the interacting fields of tension and shear.

\begin{figure}[H]
    \centering
    \begin{subfigure}[b]{0.48\textwidth}
        \centering
        \includegraphics[width=\textwidth]{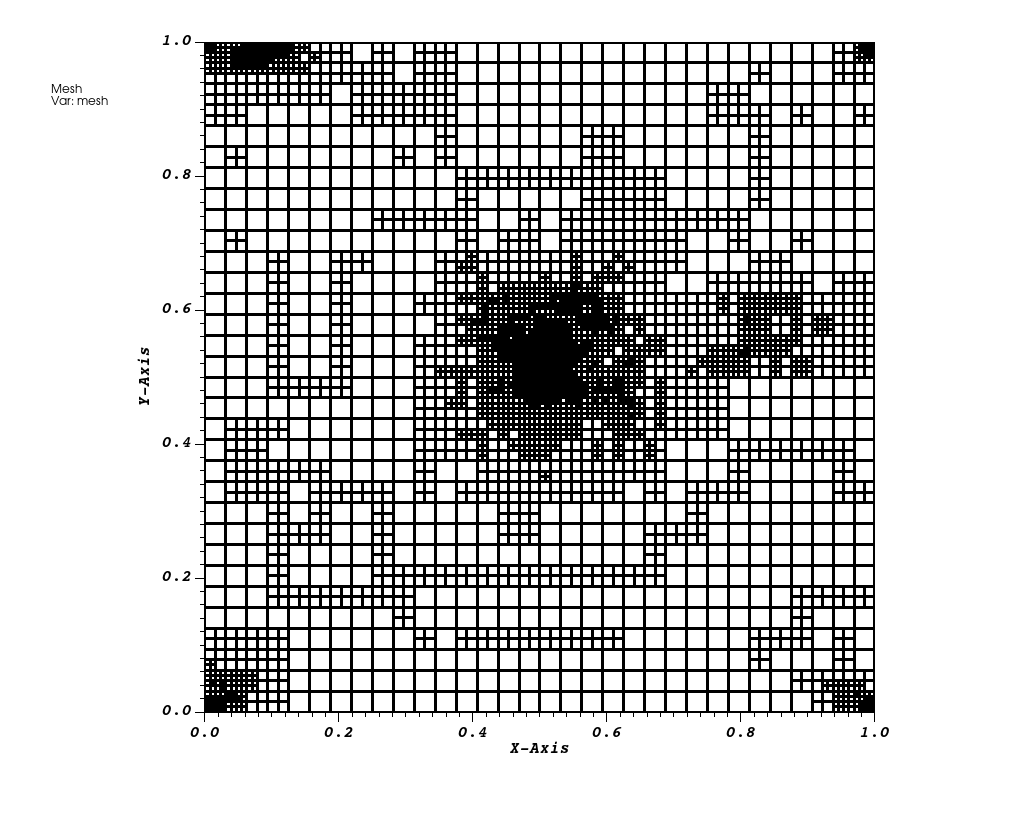}
        \caption{Mesh}
        \label{tensile_mesh}
    \end{subfigure}
    \hfill 
    \begin{subfigure}[b]{0.48\textwidth}
        \centering
        \includegraphics[width=\textwidth]{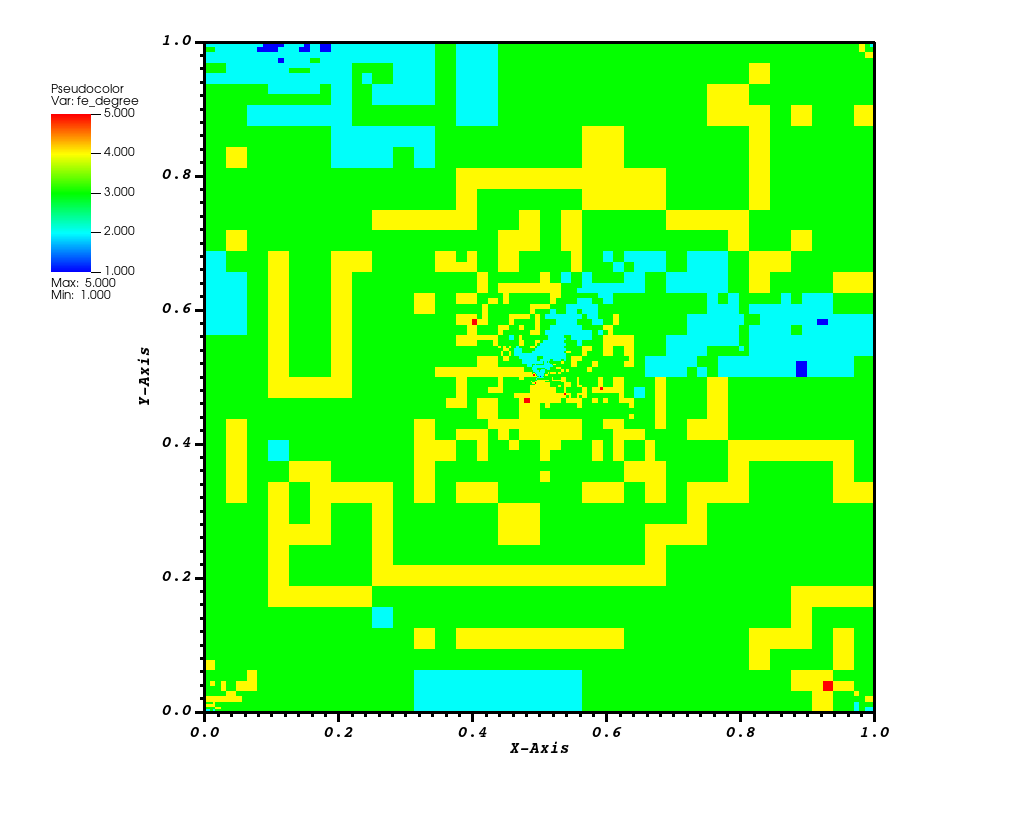}
        \caption{Polymomial degree}
        \label{fedeg_mixed}
    \end{subfigure}  
    \caption{Adaptive discretization results for the mixed-mode loading case. (a) shows the asymmetric mesh refinement driven by the combined kinematic constraints, while (b) shows the polynomial degree distribution, balancing low-order stability at the tip and corners with high-order accuracy in the bulk.}
    \label{mixed}
\end{figure}

The displacement fields resulting from the combined tensile and shear loading are presented in Figure \ref{fig:disp_mixed}. These results clearly manifest the superposition of the two fundamental kinematic modes associated with fracture. Figure \ref{fig:disp_mixed}(a) displays the horizontal displacement ($\bfu_x$), which exhibits a sharp discontinuity along the crack interface ($y=0.5$), indicative of the relative sliding (Mode II) driven by the horizontal shear component of the load. Simultaneously, Figure \ref{fig:disp_mixed}(b) reveals a distinct jump in the vertical displacement ($\bfu_y$) across the branch cut. Unlike the pure shear case, where the vertical field remained continuous, here the upper flank lifts significantly relative to the lower flank, representing the crack opening (Mode I) induced by the vertical tension. The ability of the numerical scheme to resolve sharp discontinuities in both coordinate directions simultaneously demonstrates its robustness in handling complex mixed-mode deformation patterns.

\begin{figure}[H]
    \centering
    \begin{subfigure}[b]{0.48\textwidth}
        \centering
        \includegraphics[width=\textwidth]{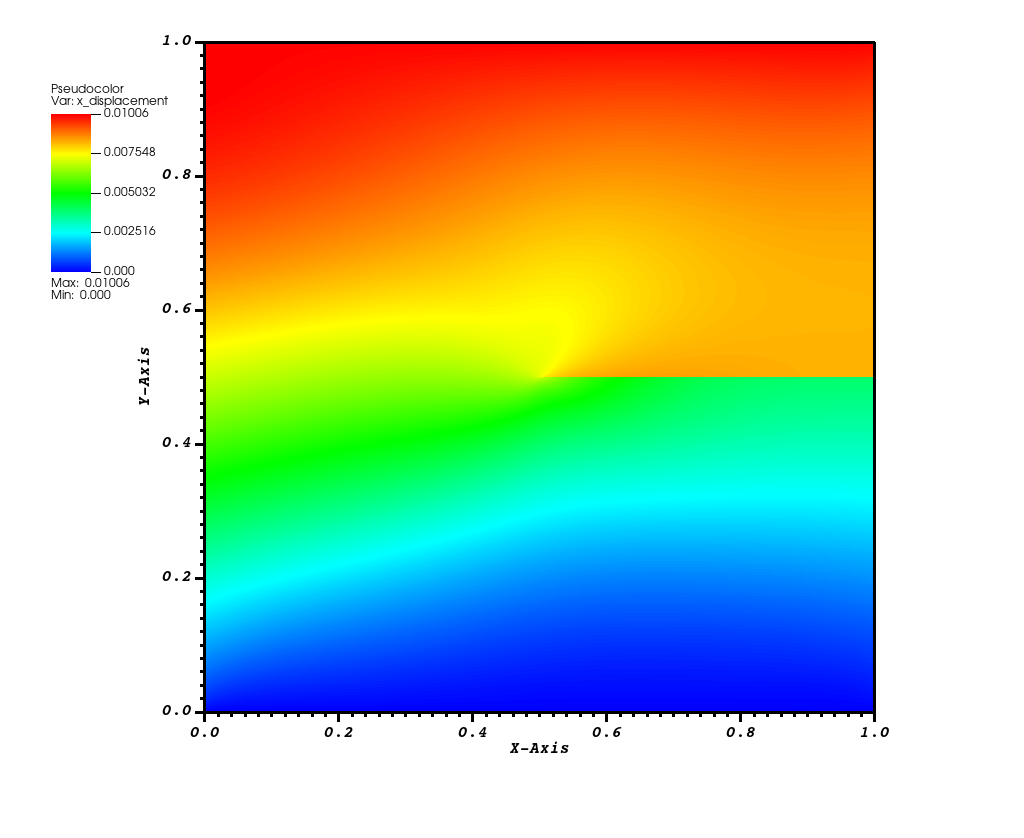}
        \caption{$\bfu_x$}
        \label{tensile_ux}
    \end{subfigure}
    \hfill 
    \begin{subfigure}[b]{0.48\textwidth}
        \centering
        \includegraphics[width=\textwidth]{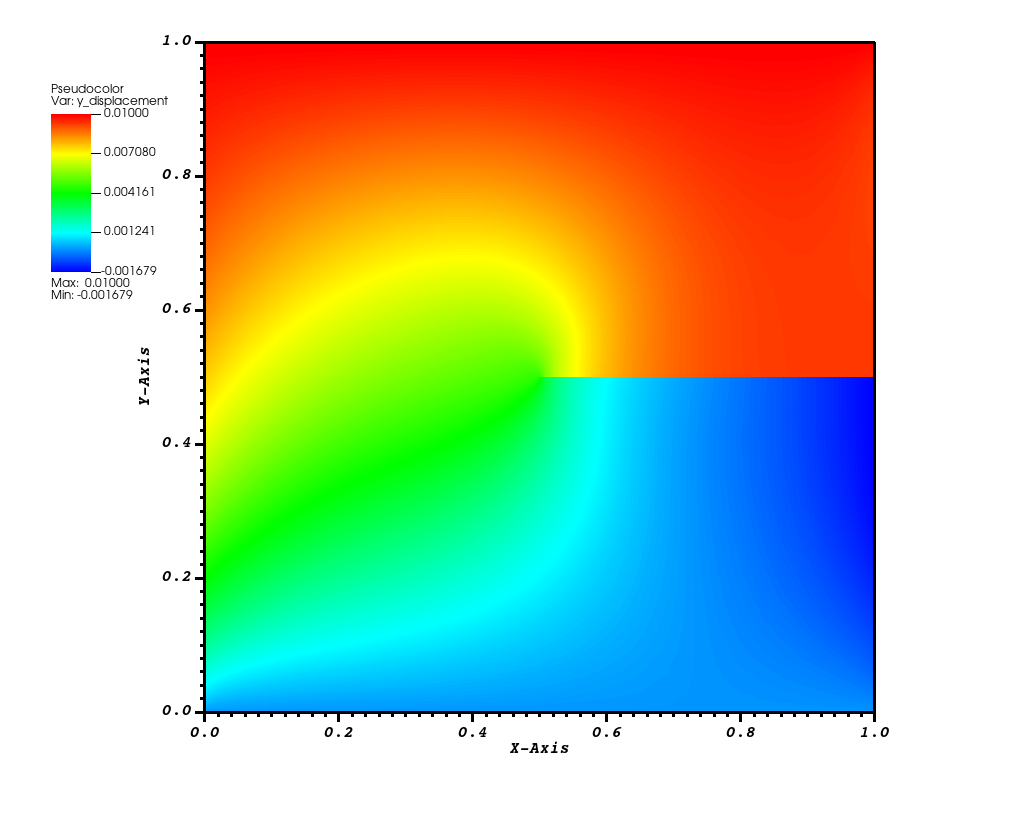}
        \caption{$\bfu_y$}
        \label{tensile_uy}
    \end{subfigure}  
   \caption{Computed displacement fields for the mixed-mode (combined tensile and shear) loading. (a) captures the sliding deformation along the crack characteristic of Mode II, while (b) captures the crack opening displacement characteristic of Mode I. The simultaneous presence of discontinuities in both fields confirms the mixed-mode nature of the response.}
    \label{fig:disp_mixed}
\end{figure}

The influence of the nonlinearity parameter $\beta$ on the crack-tip fields under mixed-mode (combined tensile and shear) loading is analyzed in Figure \ref{fig:field_variables_mixed}. The plots track the evolution of the horizontal displacement ($\bfu_x$), vertical stress ($\bfT_{22}$), vertical strain ($\bfeps_{22}$), and SED along the uncracked ligament ($y=0.5$) as they approach the crack tip at $x=0.5$. Similar to the pure tensile case, the results reveal a strong bifurcation in material response based on the sign of $\beta$. Negative values (dashed blue lines) significantly exacerbate the singularity, leading to peak stresses and strains that far exceed the linear elastic prediction ($\beta=0$, black line). This is particularly evident in the SED plot (Figure \ref{fig:field_variables_mixed}(d)), where the energy concentration for $\beta=-10$ is nearly double that of the pure tensile case, reflecting the compounded severity of the mixed-mode deformation. Conversely, positive values of $\beta$ (solid orange/red lines) continue to exhibit a regularizing behavior, effectively shielding the crack tip by lowering the peak intensities of both stress and strain.

\begin{figure}[H]
\centering
\includegraphics[width=\textwidth]{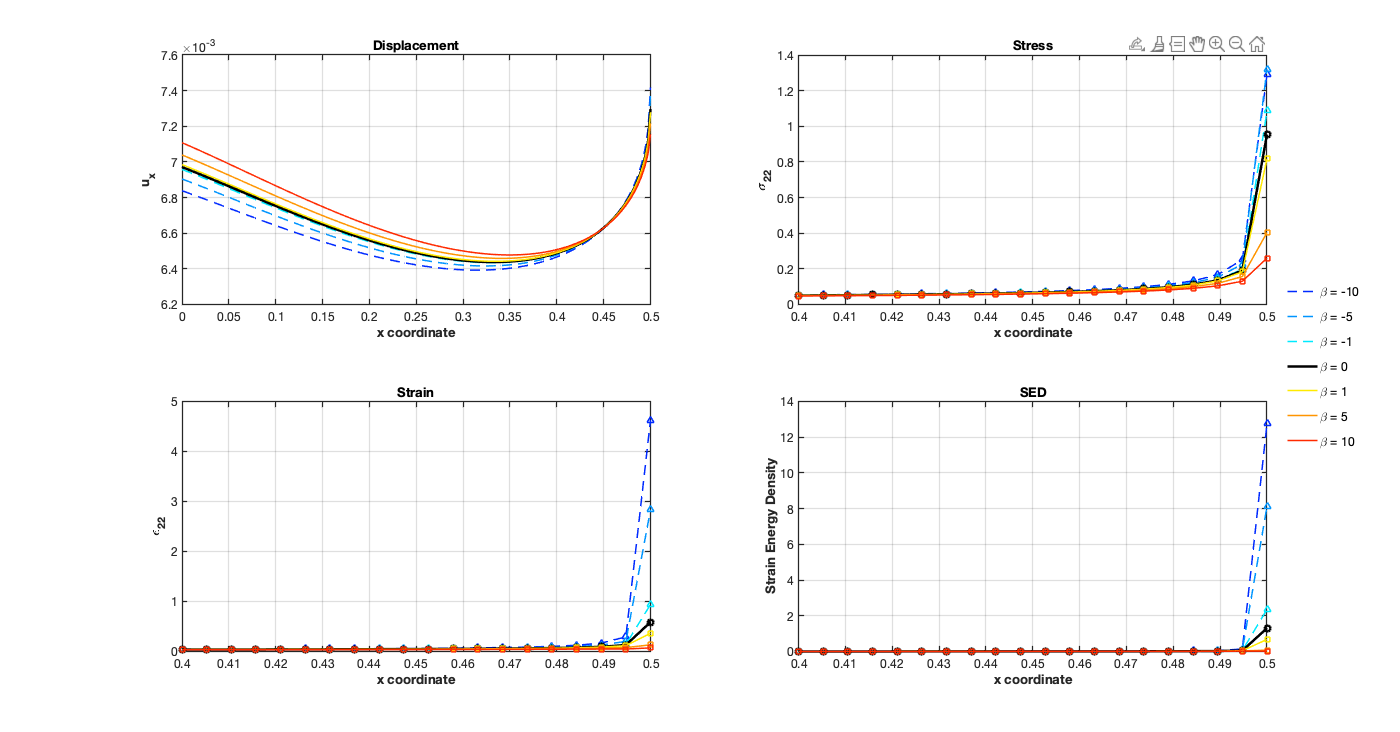} 
\caption{Distribution of field variables along the ligament ($y=0.5$) approaching the crack tip under mixed-mode loading. The panels display (a) horizontal displacement $\bfu_x$, (b) vertical stress $\bfT_{22}$, (c) vertical strain $\bfeps_{22}$, and (d) SED. The interaction of tension and shear results in higher energy concentrations compared to the pure modes. Consistent with previous cases, negative $\beta$ values (dashed lines) amplify the singularity, while positive $\beta$ values (solid lines) attenuate the near-tip fields relative to the linear elastic baseline.}
    \label{fig:field_variables_mixed}
\end{figure}

\section{Conclusion}
\label{sec:conclusion}

In this work, we have presented a rigorous mathematical and computational framework for analyzing static fracture in isotropic elastic solids governed by a class of nonlinear, density-dependent constitutive laws. The primary contribution of this study is the formulation of a well-posed boundary value problem that extends classical linear elasticity to account for material moduli that are intrinsically coupled to the volumetric strain. By establishing the ellipticity and monotonicity conditions of the stress tensor, we proved the existence and uniqueness of weak solutions within the appropriate Sobolev spaces, provided the material parameters and data satisfy specific boundedness constraints to avoid constitutive singularities.

To resolve the complex field gradients inherent to this nonlinear model, we developed and implemented a robust $hp$-adaptive finite element scheme. The solution strategy employed a damped Newton-Raphson algorithm augmented with a line search globalization method, ensuring quadratic convergence even in the presence of the localized stiffening or softening induced by the material nonlinearity.

Our numerical investigations into Mode I (tensile), Mode II (shear), and mixed-mode fracture scenarios yielded several key insights:
\begin{itemize}
    \item \textit{Regularization vs. singularity:} The nonlinearity parameter $\beta$ acts as a critical control variable. Positive values of $\beta$ were shown to regularize the crack-tip fields, effectively screening the singularity and reducing the strain energy density compared to the linear elastic baseline. Conversely, negative values of $\beta$ significantly amplified the singularity, leading to a profound concentration of stress and strain that necessitates the use of high-order polynomial approximations for accurate resolution.
    \item \textit{Algorithm robustness:} The $hp$-adaptive strategy proved essential for capturing the solution topology. The automated refinement successfully generated graded meshes with high-order polynomials ($p=5$ or $6$) in the smooth bulk regions and $h$-refinement with low-order stability ($p=1$) at the crack tip. This dual strategy allowed for the sharp resolution of displacement discontinuities across the crack faces while maintaining optimal convergence rates.
    \item \textit{Mixed-mode interaction:} The solver demonstrated the capability to capture complex, non-symmetric deformation patterns where crack opening and sliding modes coexist. The interaction of these modes under the nonlinear law resulted in energy densities that are not merely the superposition of the individual modes, highlighting the non-trivial coupling introduced by the volumetric nonlinearity.
\end{itemize}

The results presented herein lay the foundation for several promising avenues of future research, particularly regarding the temporal evolution of fracture in density-dependent media.
\begin{itemize}
\item \textit{Quasi-static evolution:} A natural extension is the study of quasi-static crack propagation, where inertial effects are negligible, but the crack length evolves as a function of a loading parameter. This introduces a free-boundary problem where the energy release rate must be re-evaluated in the context of the nonlinear constitutive law. The convexity properties of the strain energy density established in this work will be pivotal in determining the stability of crack paths and the energetic threshold for propagation.

\item \textit{Elastodynamics and hyperbolicity:} Extending this model to the dynamic regime poses significant mathematical challenges. The introduction of the inertia term transforms the governing equation into a nonlinear hyperbolic system. Investigating the formation and propagation of shock waves in such solids, and specifically how the density-dependent moduli influence wave speeds and dispersion near the crack tip, is of high interest. Furthermore, the well-posedness of the dynamic problem—specifically the conditions under which the system remains strictly hyperbolic—remains an open question that warrants detailed analysis \cite{larsen2010models}.

\item \textit{Phase-field regularization:} To obviate the need for explicit tracking of the crack geometry in complex evolution problems, coupling the current nonlinear elasticity model with a phase-field approach to fracture is a logical next step \cite{lee2022finite,yoon2021quasi,manohar2025adaptive}. This would require formulating a coupled energy functional that accounts for the competition between the bulk nonlinear elastic energy and the dissipative fracture energy, potentially leading to new insights into damage nucleation in heterogeneous materials.

\item \textit{Limitation of the current model:} While the proposed framework successfully elucidates the role of density-dependent nonlinearity in regularizing crack-tip singularities, it is important to acknowledge a specific kinematic limitation inherent to the current formulation. The variational problem defined in \eqref{weak_form_new} treats the crack faces as free boundaries subject to Neumann conditions (traction-free or prescribed load). This formulation implicitly assumes that the crack faces move apart (opening mode) or slide past one another without geometric interference. However, under compressive loading or specific mixed-mode scenarios where the crack closure effect is dominant, the current model does not mathematically preclude the interpenetration of the opposing crack faces. Physically, this represents a violation of the injectivity of the deformation map (i.e., matter overlapping). In the numerical results presented herein, this limitation was circumvented by restricting the investigation to loading regimes that induce strictly positive crack opening displacements. Nevertheless, for a fully generalizable fracture model capable of simulating contact, friction, and closure, this non-physical interpenetration must be rigorously addressed. To rectify this limitation in future developments, the boundary value problem must be reformulated from a variational equality to a \textit{variational inequality}. This involves imposing non-penetration constraints, often referred to as the \textit{Signorini contact conditions} (or unilateral contact conditions).

\end{itemize} 

\section*{Acknoledgement}
The work of SMM is supported by the National Science Foundation under Grant No.\ 2316905.

\bibliographystyle{plain}
\bibliography{references}

\end{document}